\newtheorem{definition}{Definition}[section]
\newtheorem{theorem}{Theorem}[section]
\newtheorem{conjecture}{Conjecture}[section]
\numberwithin{equation}{section}
\title[Properties of Polynomial and Transcendental Fractals]{Invariance and Inner Fractals in Polynomial and Transcendental Fractals}
\author{Nabarun Mondal}
\address{D.E.Shaw \& Co. India, Hyderabad }
\email{mondal@deshaw.com}
\thanks{ }
\author{Partha P. Ghosh}
\address{Microsoft India, Hyderabad }
\email{parthag@microsoft.com}
\thanks{Dedicated to Benoit B. Mandelbrot. \\ 
Dedicated to our parents and children, without their presence we are nothing. \\
Big thanks to Dhrubajyoti Ghosh, Arindam Chanda, for their been constant support. 
}
\subjclass[2010]{Primary 28A80; Secondary 37F99}  
\begin{document}

\keywords{
Fractals ; Polynomial Fractals ; Shape Preserving Transformations ; Big-Theta notation ; Taylor Series ; Dominance ; Transcendental  Fractals ;  
}

\begin{abstract}
A lot of formal and informal recreational study took place 
in the fields of Meromorphic Maps,
since Mandelbrot popularized the map $z \leftarrow z^2 + c$.
An immediate generalization of the Mandelbrot  $z \leftarrow z^n + c$ 
also known as the Multibrot family were also studied.  
In the current paper, general truncated polynomial maps 
of the form $z \leftarrow \sum_{p \ge 2}{a_p x^p}  + c$ are studied. 
Two fundamental properties of these polynomial maps are hereby presented. 
One of them is the existence of shape preserving transformations on fractal images,
and another one is the existence of embedded Multibrot fractals inside a polynomial fractal.
Any transform expression with transcendental terms also shows embedded Multibrot fractals,
due to Taylor series expansion possible on the transcendental functions.
We present a method by which existence of embedded fractals can be predicted. 
A gallery of images is presented alongside to showcase the findings.
 
\end{abstract}

\maketitle

\begin{section}{Introduction}
Mandelbrot popularized the map $T(z) : z \leftarrow z^2 + c$ which bears his name (\cite{cfnfs},\cite{cmns}). 
The Mandelbrot set $M$, comprise of points `$z$', such that $T(z)$, repeated infinite time, 
upon itself, remains bounded. 

The iteration is defined as in \cite{cfnfs} :-
\begin{enumerate}

\item{Initialize current point $c=P(x,y)$ and $z_0=c$. }
\item{Use the Transform map $T : z \leftarrow z^2 + c$ to generate $z_1$.}
\item{Repeat the procedure (2), to get $z_2,z_3,...,z_l$ until $|z_l| > k$ or $l=N$ is the maximum  number of iteration.}
\item{If iterations expired, the original point $c \in M$.}
\item{Else $c \not \in M$.}
\item{Take another point in $(x,y)$ plane, and start again from step (1), till all the points in the plane are exhausted.}

\end{enumerate}

When $T(z)$ is applied on itself once it can be said $T^2(z)$. 
Making things general, 
an `$x$' times repetition of $T(z)$ upon itself would be termed as $T^x(z)$.

Then, Formally:-
\begin{equation}\label{mandelbrot}
M(k) = \{ z : \lim_{x\to\infty} |T^x(z)| \le k \}
\end{equation}

A plot of the set yields outstanding visual imagery, and patterns.

\begin{subsection}{Escape Time Fractals}
A generic way to produce any set $\Phi$ of this kind would be then to have a transform function $F(z) : z \leftarrow F(z)$ such as to:-

\begin{equation}\label{generic_fractal_k}
\Phi(k) = \{ z : \lim_{x\to\infty} |F^x(z)| \le k \}
\end{equation}

Clearly then, $k$ is a very important parameter.
However the removal of $k$ can be easily done with:- 

\begin{equation}\label{generic_fractal}
\Phi =   \{ z : \lim_{x\to\infty} |F^x(z)| < \infty  \}
\end{equation}

For in case of Mandelbrot set $M$ it can be easily shown that any point $|z| > 2 $ would escape to infinity, that means:-
$$
M(z^2 + c,2) = M(z^2 + c,k) \; ; \; 2 < k \le \infty  
$$
so for $M$ , k is actually omitted. 

The images generated by equation \eqref{generic_fractal} are called ``Escape Time Fractals'',
also known as ``orbits'' fractals.
\end{subsection}

\begin{subsection}{Our Methodology}
We have modified an open source `Mandelbrot Set Viewer' computer program,
and incorporated the ability to type in mathematical expressions for the function $F(z)$ in equation \eqref{generic_fractal}.
The source code is available \href{http://code.google.com/p/dynamicfractalviewer/}{here}.

There is an expression handler in the program that can handle 
many standard mathematical functions. 
Given the expression, the program would evaluate the expression
dynamically, to generate the `escape time fractal' for the function $F(z)$.
The program is capable of:-

\begin{enumerate}
\item { zoom-in and zoom-out. } 
\item { modifying the radii of convergence `$k$', to be expressed as a power of `$e$'. }
\item { modifying number of iterations to perform as `infinity'.}
\item { saving the image as `png' format image file, that is how illustrations of this paper were generated. } 
\item { plotting the set in Colors, to be discussed more in next.}
\end{enumerate}

\begin{subsubsection}{Plotting the Set using Software}
From a computational point of view, there is no way to perform the iteration of equation \eqref{generic_fractal} up-to infinity.
Hence, a maximum `$N$' number of iteration is performed. Using the custom software we can manipulate `$N$', $N \in [0,99999]$.
If a point $z$ exceeds the limit $k$ at iteration  number $m$, then a ratio $x = m/N$ is defined. 
Clearly $x=1$ means, the function remained bounded till the `$N$'th iteration.
Based upon the value of $x$, the color of the point is chosen, among a fixed set of colors. 
Though a point might show $x=1.0$, that does not imply that, it would 
remain so for an increased $N$. Also, another point having current $x < 0$ might get into the set, for an increased $k$.
The coloring can be formally defined as a mapping:- 

\begin{equation}\label{coloring}
C(x) : \mathbb{R} \rightarrow \mathbb{N} 
\end{equation} 

The custom application which produced the illustrations of present paper is capable of plotting in any integer number of colors, 
and can handle any  mathematical expression and function.
The color codes are dependent on the color map used.
The program uses equation \eqref{coloring_program} for coloring.

\begin{equation}\label{coloring_program}
C(x) :  \lceil { p \; x } \rceil \; ; \; \forall x \in (0,1)
\end{equation} 
In equation \eqref{coloring_program} `$p$' is the total number of colors in the color map. 
The default value for color maps used is $p=256$, and the default color is Gray Scale. 
The user of the software can choose from a list of color maps, or can create his own color map.  
\end{subsubsection}

\begin{subsubsection}{Unique Findings from the Experiments}
As the software can evaluate almost any mathematical expressions to generate the fractals of type equation \eqref{generic_fractal}, 
we noticed certain interesting findings.

The most interesting phenomenon we noticed is the existence of `embedded' fractals while using Transcendental expressions, that is fractals, 
which are created by expressions with transcendental functions in it. The first observation was `$tan(z)^2 + c$' fractal 
( figure  ~\ref{fig:3_7} on page ~\pageref{fig:3_7} )contains `$z^2+c$' or Mandelbrot fractals inside it 
(figure ~\ref{fig:3_8} on page ~\pageref{fig:3_8}).

There are Mandelbrot fractals inside `${(tan(z))}^{-2} + c$' fractal. 
Figure  ~\ref{fig:3_9} on page ~\pageref{fig:3_9} illustrates a ${(cotan(z))}^2+c$ fractal, 
and figure ~\ref{fig:3_10} on page ~\pageref{fig:3_10} depicts the embedded Mandelbrot fractals inside it.

The second observation was, that the Mandelbrot set is capable of `Shape Preserving Transformations',
like rotation, scaling, and translation, just by multiplying the $z^2$ term by different (complex) factors. 
We have established a `generic' way to transform polynomial fractals without a $z^1$ term.

\end{subsubsection}

\end{subsection}

\begin{subsection}{Generalization of the Transformation Function}

Equation \eqref{mandelbrot} can be further generalized into:-

\begin{equation}\label{multibrot}
\mathbb{F} : z \leftarrow z^n + c \; ; \; n \in \mathbb{N} 
\end{equation}

where $\mathbb{N}$ is the set of Integers.

Further generalization can be done by:-

\begin{equation}\label{gen_multibrot}
\mathbb{F} : z \leftarrow z^n + c \; ; \; n \in \mathbb{R} 
\end{equation}

where $\mathbb{R}$ is the set of real numbers.
The resulting sets are known as ``Multibrots'' family.
In particular, \cite{mult_n} and \cite{mult_p} consider these maps in details.

\begin{figure}
\begin{center}
\leavevmode
\includegraphics[scale=0.3]{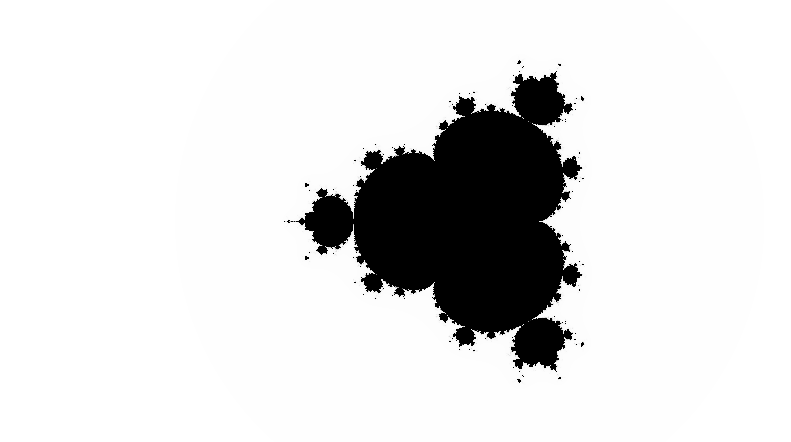}
\end{center}
\caption{Multibrot, with $n=4$ }
\label{fig:1_1}
\end{figure}

Figure  ~\ref{fig:1_1} on page ~\pageref{fig:1_1} shows a ``Multibrot'' fractal with $n=4.0$.

\end{subsection}

\end{section}

\begin{section}{Invariance : Shape Preserving Transforms on Fractal Shapes}

We know that the only shape preserving transforms are:-
\begin{enumerate}

\item{Translation}
\item{Rotation}
\item{Scaling}

\end{enumerate} 

In this section we review the integer multibrot map \eqref{multibrot}, 
under the influence of shape preserving transformations.
Our observation is that \emph{Standard shape preserving transformations works on 
the function family of `Multibrots' with positive `n'} (equation \eqref{multibrot} , $n>0$).
We note that the family with $n \in \mathbb{Q}$ does not preserve shape under rotation, 
simply because the transform cease to become a function :-
$$
z \leftarrow z^\frac{a}{b} + c \implies z \leftarrow  {^b}\sqrt{z^a} + c
$$
The expression  ${^b}\sqrt{z^a}$ is not a function, and therefore no 1-1 relationship 
between the old point, and mapped point can be established.

\begin{subsection}{Shape Preserving Transformation Theorems }

We show that the point which is now current transformed `$^tz$' , has the same trajectory
of the earlier `$z$'. So, if `$z$' was included in the set, so would be `$^tz$' with modification in $k$'s value,
and, if  `$z$' was not included in the set, so would not be `$^tz$'. That would establish the transformation properties.

\begin{theorem}\label{trans} 
\textbf{A variable transformation of the form $z : z - a$ translates a Multibrot Map.}

Translation can be achieved by :-
\begin{equation}\label{translation}
T_t(z) : z \leftarrow (z-a)^n + c \; ; \; a(x,y) \in \mathbb{C} \; ; \; n>0
\end{equation}
The result is the set image shifted co-ordinates to $a = (x,y)$.
\end{theorem}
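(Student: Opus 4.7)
The plan is to prove the claim by a direct change of variables that conjugates the modified iteration to the standard Multibrot iteration; once the orbits are put in bijection, the boundedness condition transfers verbatim and gives the translation on the set level.

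First, I would spell out the iteration produced by $T_t$: following the recipe in the introduction, the orbit starts from $z_0 = c$ and satisfies $z_{k+1} = (z_k - a)^n + c$. The natural move is to introduce the affine change of variable $w_k := z_k - a$, so that $w_0 = c - a$ and $w_{k+1} = z_{k+1} - a = w_k^n + (c - a)$. Setting $c' := c - a$, the sequence $(w_k)$ is then precisely the standard Multibrot orbit (equation \eqref{multibrot}) with parameter $c'$, initialized at $w_0 = c'$. In other words, the trajectory of $z$ under $T_t$ with parameter $c$ is the trajectory of $w$ under the untranslated Multibrot map with parameter $c' = c - a$, shifted rigidly by $a$.

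Next, I would transfer the boundedness condition. Because $z_k = w_k + a$, the triangle inequality yields $\bigl| |z_k| - |w_k| \bigr| \le |a|$, so $(z_k)$ is bounded if and only if $(w_k)$ is bounded. Applying the parameter-free formulation \eqref{generic_fractal}, the point $c$ belongs to the set generated by $T_t$ if and only if $c - a$ belongs to the ordinary Multibrot set of exponent $n$. Hence the new set equals $\{\, c \in \mathbb{C} : c - a \text{ lies in the Multibrot set} \,\}$, which is exactly the original Multibrot set translated by $a$, establishing the claim.

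The only real subtlety, and the only place where I expect to spend any care, is the bookkeeping hidden in the phrase ``with modification in $k$'s value'' in the paragraph preceding the theorem: for a finite escape radius $k$ the thresholds that certify boundedness for $z_k$ and $w_k$ differ by $|a|$, since $|z_k| \le k$ forces only $|w_k| \le k + |a|$ and conversely. This offset is harmless and disappears in the $k \to \infty$ formulation used in equation \eqref{generic_fractal}. No other step should present difficulty, because $w \mapsto w + a$ is an affine bijection of $\mathbb{C}$ that is continuous with continuous inverse, so it preserves all topological and metric features of the orbit up to the constant shift.
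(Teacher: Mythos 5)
Your proposal is correct and is essentially the paper's own argument: the paper proves by induction that the orbit of the shifted point $c+a$ under $T_t$ equals the original Multibrot orbit of $c$ translated by $a$, which is exactly your conjugacy $w_k = z_k - a$ viewed from the other side (you track $c$ versus $c-a$ instead of $c+a$ versus $c$). Your explicit handling of the boundedness transfer and the finite escape-radius offset $|a|$ is a slightly more careful rendering of the paper's informal remark about adjusting $k$, but the underlying approach is the same.
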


One point to be noted here is that as the radii of convergence is $k$, 
if the point $a$ resides further than $k$, then we need to increase the radii
appropriately, for seeing the whole set's image.

\begin{proof}[Proof of the Translation Theorem]

To prove this we note that all points which were mapped as $z$ according to equation \eqref{multibrot}, 
are now mapped as $z+a$ in the new mapping with equation \eqref{translation}.
That would mean that if $z \in M$ for equation \eqref{multibrot}, then $z+a \in M'$ for equation \eqref{translation},
and vice versa. We take a point $^tc = c + a$ and hence $^tz_0 = c + a$.

Clearly at 1st level the original transformation can be written as:-

\begin{equation}\label{z_1}
z_1 = z^n_0 + c = c^n + c
\end{equation}

Compare with 1st level transformation using equation \eqref{translation}:-
$$
{^tz}_1 = ({^tz}_0 -a)^n + {^tc} = ({^tc} - a )^n + {^tc} = (c + a - a)^n + c + a = c^n + c + a 
$$

where ${^tz}$ and ${^tc}$ signifies the transformed $z,c$. 

So we have established that:- 
\begin{equation}\label{z_1_t}
{^tz}_1 =z_1 + a
\end{equation} 

Now, let it be true for some $l \ge 1$.
Then we have:-
$$
{^tz}_l =z_l + a
$$ 

We have:-
\begin{equation}\label{z_l_1}
z_{l+1} = z^n_l + c 
\end{equation}

and
$$
{^tz}_{l+1} = ({^tz}_l - a)^n + {^tc} 
$$
Replacing the value of ${^tz}_l$ we have:-
$$
^tz_{l+1} = (z_l + a  - a)^n + ^tc = (z^n_l + c) + a 
$$

hence:-
\begin{equation}\label{z_l_1_t}
^tz_{l+1} =  z_{l+1} + a
\end{equation}

Using equation \eqref{z_1_t} as basis and then having equation \eqref{z_l_1_t},
implies that using principle of mathematical induction, we can say
all points which were mapped as $z$ according to equation \eqref{multibrot}, 
are now mapped as $z+a$ in the new mapping with equation \eqref{translation}. 
That establishes the translational property.
\end{proof}

\begin{theorem}\label{rot}
\textbf{ Transformation $z : e^{i\theta}z$ rotates a Multibrot Map.}

A ``clockwise'' rotation of angle $\rho = \theta/(n-1)$ can be achieved by:-
\begin{equation}\label{rotation}
T_r(z) : z \leftarrow e^{i\theta}z^n + c 
\end{equation}
\end{theorem}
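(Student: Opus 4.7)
The plan is to follow the same strategy as in Theorem~\ref{trans}: exhibit a change of variables $z \mapsto \alpha z$ that conjugates the rotated iteration~\eqref{rotation} to the standard Multibrot iteration~\eqref{multibrot}, and then use induction to show the two orbits correspond point-by-point under this change of variables. Since a unimodular change of variables preserves moduli, the escape criterion is carried over verbatim, and the image of the set produced by~\eqref{rotation} will simply be the set produced by~\eqref{multibrot} acted on by $\alpha$, which will be the asserted rigid rotation.

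First I would determine the conjugating factor $\alpha$. Seeking $\alpha \in \mathbb{C}$ with $|\alpha|=1$ such that the substitution ${^r}z = \alpha z$, ${^r}c = \alpha c$ intertwines~\eqref{rotation} with~\eqref{multibrot}, a direct computation gives
\[
{^r}z_{l+1} \;=\; e^{i\theta}({^r}z_l)^n + {^r}c \;=\; e^{i\theta}\alpha^n z_l^n + \alpha c,
\]
while $\alpha z_{l+1} = \alpha z_l^n + \alpha c$. These agree iff $e^{i\theta}\alpha^n = \alpha$, i.e.\ $\alpha^{n-1} = e^{-i\theta}$. Taking the principal root $\alpha = e^{-i\theta/(n-1)} = e^{-i\rho}$ solves this, and multiplication by this $\alpha$ acts on the plane as a clockwise rotation by $\rho = \theta/(n-1)$, matching the stated angle.

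Next I would execute the induction in the style of the translation proof. For the base case, with ${^r}z_0 = {^r}c = \alpha c$, expansion gives ${^r}z_1 = e^{i\theta}\alpha^n c^n + \alpha c = \alpha(c^n + c) = \alpha z_1$. For the inductive step, assuming ${^r}z_l = \alpha z_l$, the same manipulation yields ${^r}z_{l+1} = e^{i\theta}\alpha^n z_l^n + \alpha c = \alpha(z_l^n + c) = \alpha z_{l+1}$, closing the induction. Because $|\alpha|=1$, we obtain $|{^r}z_l| = |z_l|$ for every $l \ge 0$, so the escape-time behaviour of ${^r}c$ under~\eqref{rotation} is identical to that of $c$ under~\eqref{multibrot}. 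Equivalently, the set produced by~\eqref{rotation} is $\alpha \cdot M$, i.e.\ $M$ rotated clockwise by $\rho$.

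I do not anticipate a substantial obstacle: the entire theorem reduces to the algebraic identity $\alpha^{n-1} = e^{-i\theta}$ together with an induction essentially identical to the one in Theorem~\ref{trans}. The only subtlety worth flagging is that for non-integer $n$ the quantity $\alpha^n$ is multi-valued; this is consistent with the remark opening the section that rational exponents destroy the function property, and no such branch issue arises for the positive-integer Multibrots treated here.
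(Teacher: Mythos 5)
Your proposal is correct and follows essentially the same route as the paper: the same conjugating factor $\alpha = e^{-i\theta/(n-1)}$ and the same induction showing the transformed orbit is $\alpha$ times the original orbit, with the modulus-preservation remark making explicit what the paper states informally before its theorems. The only addition is your upfront derivation of $\alpha$ from $\alpha^{n-1}=e^{-i\theta}$, which is a nice clarification but not a different argument.
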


\begin{proof}[Proof of the Rotation Theorem]
To prove that we note again:-
$z_0 = c$ and $^tz_0 = ce^{(\frac{-i\theta}{n-1})}$.

We have:-
$$
{^tz}_1 = e^{i\theta}({^tz}^n_0) + ^tc = c^ne^{(\frac{ in\theta - i\theta -in\theta}{n-1})} + ce^{(\frac{-i\theta}{n-1})} = e^{(\frac{-i\theta}{n-1})} ( c^n + c) 
$$

Hence, using equation \eqref{z_1}
\begin{equation}\label{z_1_r}
{^tz}_1 = e^{(\frac{-i\theta}{n-1})}z_1 
\end{equation} 

Assume now that for some $l \ge 1$

$$
^tz_l = e^{(\frac{-i\theta}{n-1})}z_l 
$$

Clearly we would have:-

$$
{^tz}_{l+1} = e^{i\theta}({^tz}^n_l) + ^tc =    e^{(\frac{-i\theta}{n-1})} z^n_l +  ce^{(\frac{-i\theta}{n-1})}  = e^{(\frac{-i\theta}{n-1})} ( z^n_l + c)
$$

Then, using equation \eqref{z_l_1} we get:-
\begin{equation}\label{z_l_1_r}
{^tz}_{l+1} = e^{(\frac{-i\theta}{n-1})} z_{l+1} 
\end{equation}

Using equation \eqref{z_1_r} as basis and having equation \eqref{z_l_1_r}
implies that, using principle of mathematical induction, we can say
all points which were mapped as $z$ according to equation \eqref{multibrot}, 
are now mapped as $e^{(\frac{-i\theta}{n-1})}z$ in the new mapping with equation \eqref{rotation}. 
That establishes the rotational property.

\end{proof}

Note that for ``Multibrots'' with $n<0$ this formula holds true.
As the term $\rho < 0 \forall n < 0 $ , this rotation would be in ``anti-clock-wise'' direction.
For $n>0$ , this rotation is in ``clockwise'' direction.
Also note that, for ``Multibrots''with fractional $n>0$, rotation leads to altogether different type of
images. They are not invariant under rotation.

\begin{theorem}\label{scale} 
\textbf{Transformation  $z : az $ scales a Multibrot Map}

A Scaling of factor $\sigma$ can be achieved by :-
\begin{equation}\label{scaling}
T_s(z) : z \leftarrow az^n + c \; ; \; \sigma = a^{(\frac{1}{1-n})} \; ; \; a \in {\mathbb{R}}_{+} \; ; \; n > 0 
\end{equation}
The resulting figure is a scaled version of the original Map, by a factor of $\sigma$.
\end{theorem}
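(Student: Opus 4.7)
The plan is to mirror the inductive template that already worked for Theorems \ref{trans} and \ref{rot}: posit a simple algebraic relation between the transformed orbit $\{{^tz}_l\}$ and the original orbit $\{z_l\}$, pin down the unknown constant by a one-step consistency check, and then verify the relation by induction on $l$.

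First I would try the multiplicative ansatz ${^tz}_l = \sigma z_l$ for some positive real constant $\sigma$, in analogy with the additive shift used for translation and the phase factor used for rotation, starting from ${^tc} = \sigma c$ so that ${^tz}_0 = \sigma c$. Applying one step of \eqref{scaling} and comparing with $\sigma z_1 = \sigma(c^n + c)$ forces $a\sigma^n = \sigma$, hence $\sigma^{n-1} = a^{-1}$ and therefore $\sigma = a^{1/(1-n)}$. This is exactly the value claimed in the statement, and it is a well-defined positive real whenever $a \in \mathbb{R}_{+}$ and $n > 0$ with $n \neq 1$, so the ansatz is self-consistent.

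Next I would close the induction. The base case ${^tz}_1 = \sigma z_1$ is delivered by the very computation that fixed $\sigma$, and the inductive step reads
$$
{^tz}_{l+1} = a({^tz}_l)^n + {^tc} = a\sigma^n z_l^n + \sigma c = \sigma(z_l^n + c) = \sigma z_{l+1},
$$
where the middle equality invokes the defining relation $a\sigma^n = \sigma$. This identifies every iterate of the transformed dynamics as $\sigma$ times the corresponding iterate of the original Multibrot map.

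Finally, I would convert this orbit identity into the geometric statement of the theorem. Since $|{^tz}_l| = \sigma|z_l|$ for every $l$, boundedness of the orbit is preserved under $c \mapsto \sigma c$ and the escape radius merely rescales by $\sigma$; hence the set produced by \eqref{scaling} is the image of the original Multibrot set under the homothety $z \mapsto \sigma z$. The step I expect to be the main obstacle is not the induction itself but making the hypotheses $a \in \mathbb{R}_{+}$ and $n \neq 1$ explicit and meaningful: outside this regime, $\sigma^{n-1} = a^{-1}$ either admits several complex roots (so one is implicitly choosing a rotation along with the dilation) or collapses at $n=1$, where \eqref{scaling} degenerates to the affine map $z \leftarrow az + c$ and is no longer a Multibrot iteration. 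These restrictions must be flagged up front so that the ``scaling'' interpretation of $\sigma$ is unambiguous.
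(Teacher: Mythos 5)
Your proposal is correct and follows essentially the same route as the paper: an induction establishing ${^tz}_l = a^{\frac{1}{1-n}}\, z_l$ for every iterate, with the base case fixed by a one-step computation, exactly as in the paper's proof of Theorem \ref{scale}. Your additional remarks (deriving $\sigma$ from the consistency relation $a\sigma^n=\sigma$, and flagging the $n\neq 1$ degeneracy and the positivity of $a$) are sensible refinements but do not change the underlying argument.
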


\begin{proof}[Proof of the Scaling Theorem]

To show this, we start with  $z_0 = c$ and 
$^tz_0 = ca^{(\frac{1}{1-n})}$ as initial points.

Clearly then:-

$$
{^tz}_1 = a({^tz_0}^n) + ^tc = a( c a^{(\frac{1}{1-n})} )^n +   ca^{(\frac{1}{1-n})} =  a^{(\frac{1}{1-n})} ( c^n + c)
$$
hence, using equation \eqref{z_1}

\begin{equation}\label{z_1_s}
^tz_1 = a^{(\frac{1}{1-n})} z_1
\end{equation}

Now, assume it is true for some $l \ge 1$

$$
{^tz}_l =  a^{(\frac{1}{1-n})} z_l
$$

Clearly we can say:-

$$
{^tz}_{l+1} = a({^tz_l}^n) + ^tc =  a ( a^{(\frac{1}{1-n})} z_l )^n + ca^{(\frac{1}{1-n})} = a^{(\frac{1}{1-n})} ( z^n_l + c )
$$

Then, using equation \eqref{z_l_1} we get:-
\begin{equation}\label{z_l_1_s}
{^tz}_{l+1} = a^{(\frac{1}{1-n})} z_{l+1} 
\end{equation}

Using equation \eqref{z_1_s} as basis and having equation \eqref{z_l_1_s}
implies that, using principle of mathematical induction, we can say
all points which were mapped as $z$ according to equation \eqref{multibrot}, 
are now mapped as $a^{(\frac{1}{1-n})}z$ in the new mapping with equation \eqref{scaling}. 
That establishes the scaling property.

\end{proof}
   
Note that, ``Multibrots''with fractional $n>0$ are  invariant under scaling.  

\end{subsection}

\begin{subsection}{Generic Shape Preserving Transformation}

The most generic shape preserving transformation would be, then
\begin{equation}\label{spt}
T(z) : z \leftarrow a(z-b)^n + c \; ; \; a=|a|e^{i\theta},b(x,y) \in \mathbb{C}
\end{equation}

The transformation of equation \eqref{spt} translates the origin by $b=(x,y)$ amount, 
then rotates the map by $\theta/(n-1)$, and then scales down the map by $|a|^{(\frac{1}{1-n})}$.

\begin{subsubsection}{Shape Preserving Transformations For Polynomial Fractals}

Polynomials are important due to Taylors Series expansion available for 
any infinitely differentiable function.
The generic formula for a polynomial transform is:-
\begin{equation}\label{poly_transform}
T(z) = \sum\limits_{j=0}^n a_j z^j  + c = P(z) + c
\end{equation}
We note that  while equation \eqref{translation} can be immediately applied to a polynomial, 
due to variations in power transforms like rotation: equation \eqref{rotation} and scaling: equation \eqref{scaling} 
can not be used on equation \eqref{poly_transform}.
Hence, only with the translation part, we can redefine equation \eqref{spt} for polynomials as:-

\begin{equation}\label{t_poly}
T(z) : z \leftarrow  P(z-b)  + c \; ; \; b(x,y) \in \mathbb{C}
\end{equation}

\end{subsubsection}

\begin{subsubsection}{Rotation and Scaling of Truncated Polynomial Fractals}

We can not consider polynomial with a linear $z^1$ term because then the results 
from rotation \eqref{rotation} and scaling \eqref{scaling} becomes undefined with $n=1$. 
\begin{definition}\label{trunc_poly-def}
\textbf{Left Truncated Polynomial.}

A Left Truncated polynomial is defined as:-
\begin{equation}\label{trunc_poly}
P(z,l) = \sum_{k = l} a_k z^k \; ; \; l \ge 2   
\end{equation}

\end{definition}
We define a truncated polynomial is a polynomial with at least no linear term,
(and possibly constant term) to be defined formally in a later section, with equation \eqref{trunc_poly}.
We can drop the requirement of not having a constant term, because in that case $z^0$ , and the equations \eqref{rotation}
and \eqref{scaling} remains defined.

We have found out from equations \eqref{rotation} and \eqref{scaling} 
that they affect different powers of `z' differently. 
But assume, we still want to find a way to rotate, and scale a polynomial fractal image appropriately.
For notational convenience, we treat $P(z)$ as a vector, with components  $\chi_j = a_jz^j \; ; \; j \ne 1 $.

$$
\mathbf{P} = [ a_jz^j ]
$$

We note that, to generate an uniform scaling, $\sigma_i \ne \sigma_j$.

\begin{equation}\label{scale_vector}
\mathbf{s} = [ \sigma_j ] 
\end{equation}

We note that, to generate an uniform rotation, $\rho_i \ne \rho_j$.

\begin{equation}\label{rotation_vector}
\mathbf{r} = [\rho_j]
\end{equation}

It is easy to see that the for the `effect' of them all should be equal to some predefined value `$u$',
$$
u = {\sigma_j}^{\frac{1}{1-j}}    
$$ 
rewriting, we get:-

$$
\sigma_j = {u}^{1-j}
$$

In the same way, to get a fixed angle of rotation `$\theta$' :-

$$
\rho_j = e^{i(j-1)\theta}
$$

Hence, a properly scaled and rotated original Polynomial fractal $\mathbf{P} + c$ transform, 
with translation, would have components:-

\begin{equation}\label{spt_poly}
^{\sigma\rho t}\mathbf{P} = [^T\chi_j] = [ u^{1-j} a_j e^{i(j-1)\theta} (z-b)^j ]
\end{equation} 

Using equation \eqref{spt_poly} we can modify and shift any `non-infinite' ($j \ne \infty$ ) truncated ($j \ne 1$ ) polynomial fractal, 
such a way that the original shape is preserved. So, the set image remains `invariant' under equation \eqref{spt_poly}.

Note that equation \eqref{spt_poly} can handle $n<0$, but not the scaling part, as `$k$' needed to be adjusted.

\end{subsubsection}

\begin{subsubsection}{Demonstration of Invariance}
We take the equation of :
\begin{equation}\label{demo_inv}
T(z) = z^2 + z^3 + c 
\end{equation}
The image of equation \eqref{demo_inv} can be seen in figure  ~\ref{fig:3.x.1} on page ~\pageref{fig:3.x.1}.
The image of this can be rotated clockwise $\pi/2$ radian, by multiplying the $\chi_2$ by $i$ and 
$\chi_3$ by $e^{i\pi}$. Hence, 

\begin{equation}\label{demo_inv_res_1}
^{1,\frac{\pi}{2},1}T(z) = i{(z-1)}^2 - {(z-1)}^3 + c 
\end{equation}
Equation \eqref{demo_inv_res_1} is a clockwise $\pi/2$ rotated, and right shifted by 1, image of the equation \eqref{demo_inv},
which is clearly established by the figure ~\ref{fig:3.inv.1} on page ~\pageref{fig:3.inv.1}.
To further establish the result, we present the second equation:-

\begin{equation}\label{demo_inv_res_2}
^{1,\frac{\pi}{4},1}T(z) = e^{(i\frac{\pi}{4})}{(z-1)}^2 + i{(z-1)}^3 + c 
\end{equation}
Equation \eqref{demo_inv_res_2} is a clockwise $\pi/4$ rotated, and right shifted by 1, image of the equation \eqref{demo_inv},
which is clearly established by the figure ~\ref{fig:3.inv.2} on page ~\pageref{fig:3.inv.2}.
\end{subsubsection}
\end{subsection}

\begin{subsection}{Self Similarity}
With the previous set of transforms in mind, we focus on what is
observed self similarity in the Multibrot fractals.

The tool we seek is the magnify a small section of the fractal shape,
and finding how it should look. We note that this has to deal with 
the transform scaling ( theorem \ref{scale} ). But that won't be all.

The point with respect to which we need to \emph{magnify} or \emph{zoom}
first needs to be made the origin of the co-ordinate, and then \emph{magnified} , 
or \emph{scaled}. That would make the transform looks like 
\emph{translation followed by scaling}. But that is not so.

We established that the direction of the co-ordinate system matters for fractals,
as in Multibrot with fractional powers does not show remain same under an induced
co-ordinate rotation. Therefore, the transform should also take care of aligning 
the co ordinates properly, via rotation.

\begin{definition}\label{zoom}
\textbf{The Zoom Transform.}

Zoom of a fractal image can be done using :-

\begin{equation}\label{zoom-t}
z \leftarrow s e^{i\phi} f(z - a) + c  
\end{equation}
where $s$ is a positive constant factor, $a=(x_n, y_n)$ a complex number 
to have the co-ordinate origin shifted to, and $\phi$ denotes the rotation angle.
 
\end{definition}

\begin{conjecture}\label{pseudo-self-similarity}
\textbf{Existence of Self Similar Embedded Fractals.}

A fractal of the type :-
$$
z \leftarrow  f(z) + c
$$
has quasi self similar embedded fractals 
under magnification iff it is invariant under 
the equation \eqref{zoom-t} or the zoom transform.
\end{conjecture}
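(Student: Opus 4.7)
The plan is to reduce the zoom transform \eqref{zoom-t} to a composition of the three shape-preserving operations already handled (translation, rotation, and scaling from Theorems \ref{trans}, \ref{rot}, \ref{scale}), and then to argue that any quasi self-similar magnification of an escape-time fractal must factor through precisely this composition. I would first fix notation: write $g(z) = s\,e^{i\phi} f(z-a) + c$ for the ``zoomed'' iterator, and denote by $\Phi_f$ and $\Phi_g$ the escape-time sets defined by \eqref{generic_fractal} for $f$ and $g$ respectively. The goal of the biconditional is to pin down when $\Phi_g$ is an affine copy (translation, rotation, scaling) of a subset of $\Phi_f$ so that the ``window'' around $a$ inside $\Phi_f$, after the magnification \eqref{zoom-t}, reproduces $\Phi_f$ up to that affine similitude.

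For the backward direction (invariance implies embedded self-similar fractals), I would imitate the inductive templates of the three earlier proofs. Setting ${}^t z_0 = a + u(c-c_0)$ for an appropriate scalar $u$ built from $s$ and $\phi$, I would show by induction on $l$ that the orbit $\{{}^t z_l\}$ under $g$ and the orbit $\{z_l\}$ under $f$ are related by a fixed affine map $z \mapsto \alpha z + \beta$. Boundedness of one orbit then transfers to the other, so the zoom window around $a$ in $\Phi_f$ is the affine image of the entire $\Phi_g$; invariance (meaning $\Phi_g$ coincides with $\Phi_f$ up to that similitude) then yields a true quasi self-similar copy of $\Phi_f$ embedded at $a$.

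The forward direction is the substantive half. Here I would assume a quasi self-similar copy of $\Phi_f$ sits inside $\Phi_f$ near some point $a$, with explicit affine similitude parameters $(s,\phi,a)$, and I would try to deduce that $g$ produces the same escape-time set as $f$ (up to that similitude). The natural route is to run the orbit comparison in reverse: the similitude identifies the set of initial conditions $c$ that escape under $f$ with those that escape under $g$; if this identification holds on a set of positive measure in parameter space, one should be able to conclude pointwise equality of the maps $g$ and $f$ (post-composed with the similitude) by analyticity, since both are polynomial or entire functions of $z$ and holomorphic in $c$. This upgrade from ``same escape set'' to ``same map up to similitude'' is precisely the statement of invariance under \eqref{zoom-t}.

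The hard part will be making the notion of ``quasi self similar embedded fractal'' rigorous enough to run the forward direction; as stated in the excerpt it is informal, which is likely why the statement is marked a \emph{conjecture} rather than a theorem. A secondary obstacle is that for a generic non-polynomial $f$ (e.g.\ the transcendental examples $\tan^2 z + c$ mentioned in the introduction), the affine similitude matching can fail to be exact and only hold asymptotically near $a$, so the ``iff'' may need to be weakened to an asymptotic equivalence. I would therefore first state and prove a clean version for truncated polynomials using the invariance machinery of \eqref{spt_poly}, and then indicate that via Taylor expansion of $f$ around $a$ (dropping the constant and linear terms, as required by Definition \ref{trunc_poly-def}) the result extends to transcendental $f$ in the approximate sense that motivates the conjecture.
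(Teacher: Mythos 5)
The statement you are trying to prove is stated in the paper only as a \emph{conjecture}: the paper offers no proof of it at all (the zoom transform \eqref{zoom-t} is merely defined and the conjecture recorded), so there is no paper argument to compare yours against. Judged on its own terms, your plan has two concrete gaps. First, the backward direction cannot be run by imitating the inductions of Theorems \ref{trans}, \ref{rot} and \ref{scale}. Those inductions work only because $z^n$ is homogeneous, i.e. $(\lambda z)^n = \lambda^n z^n$, which is exactly what lets the factor $e^{-i\theta/(n-1)}$ or $a^{1/(1-n)}$ pass through every iterate. For a general $f$ (even a two-term polynomial such as $z^2+z^3$) the map $s\,e^{i\phi} f(z-a)+c$ is \emph{not} affinely conjugate to $f(z)+c$, and the claimed relation ${}^t z_l = \alpha z_l + \beta$ fails already at $l=2$; this is precisely why the paper abandons the single-transform approach for polynomials and instead rescales each monomial separately in \eqref{spt_poly}. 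So ``invariance under \eqref{zoom-t}'' cannot be established by your induction; it has to be taken as the hypothesis, and even then you still need an argument that invariance of the \emph{map} forces self-similar structure of the escape set at the zoom point $a$, which you do not supply.

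Second, your forward direction rests on the step ``same escape set on a set of positive measure, hence same map up to similitude, by analyticity.'' That inference is unjustified: equality of escape-time sets is a set-theoretic statement about orbits, not an identity between analytic functions on an open set, so the identity theorem does not apply; distinct maps can share escape behaviour on large parameter regions (the paper's own dominance results, Theorems \ref{theta-fractal} and \ref{domth}, show that $f$ and a simpler $g$ with $f\in\Theta(g)$ produce the same bounded set near a point without being equal up to any similitude). Combined with the fact that ``quasi self similar embedded fractal'' is never given a precise definition --- which you correctly flag --- neither implication of the ``iff'' is actually within reach of the tools in the paper. Your closing suggestion (prove a clean statement for truncated polynomials via \eqref{spt_poly} and then pass to transcendental $f$ by Taylor expansion, in an asymptotic sense) is a reasonable research direction, and it is essentially the spirit in which the paper leaves the claim as a conjecture, but as written your proposal does not constitute a proof.
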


\end{subsection}

\end{section}

\begin{section}{Generic Analytic Family and Embedded Fractals}

It is to be noted here that all geometric transcendental 
functions can be expanded in Taylor series, and hence a transform of the form:

$$
T: z \leftarrow G(z) + c
$$ 

where $G(z)$ is a geometric transcendental function,
qualifies as an infinite polynomial map.

We have noticed an important property of such maps, that they have embedded Multibrot
fractals inside small regions within them. Some of them we have already shared in the introduction section.
We furnish some more examples here:- 
\begin{enumerate}
\item{The fractal of `$cos(z)-1 + c$' (figure ~\ref{fig:3_1} at page  ~\pageref{fig:3_1} ) 
contains Mandelbrot fractals inside (figure ~\ref{fig:3_2} at page  ~\pageref{fig:3_1} ).

\begin{figure}
\begin{center}
\leavevmode
\includegraphics[scale=0.3]{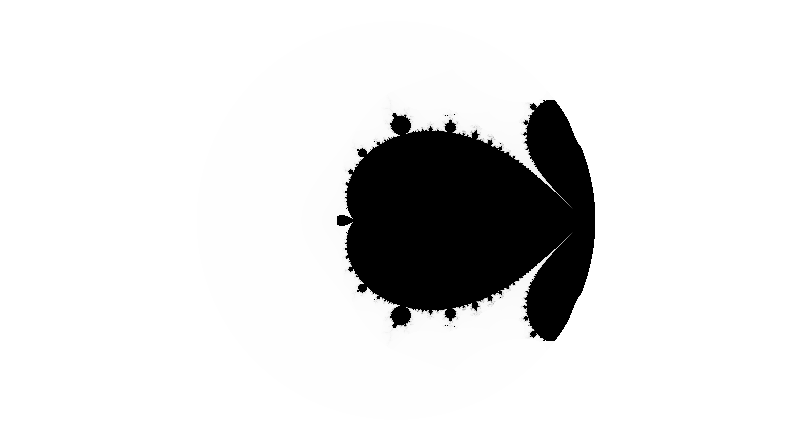}
\end{center}
\caption{ Fractal of $cos(z)-1+c$ , k is about 8 }
\label{fig:3_1}
\end{figure}

\begin{figure}
\begin{center}
\leavevmode
\includegraphics[scale=0.3]{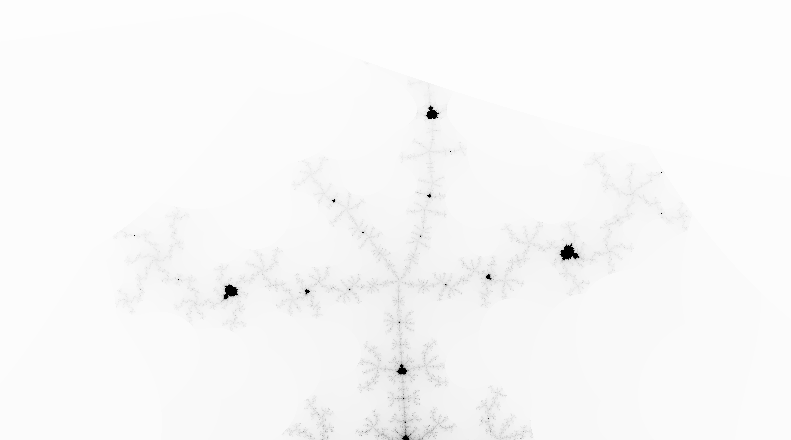}
\end{center}
\caption{ Fractal of $cos(z)-1+c$ showing Inner Mandelbrot, k is about 8 }
\label{fig:3_2}
\end{figure}
}

\item{The fractal of `$sin(z^2)+c$'  (figure ~\ref{fig:3_3} at page  ~\pageref{fig:3_3}) 
has Mandelbrots inside (figure ~\ref{fig:3_4} at page  ~\pageref{fig:3_4} ), and looks like a deformed eaten away Mandelbrot image overall.

\begin{figure}
\begin{center}
\leavevmode
\includegraphics[scale=0.3]{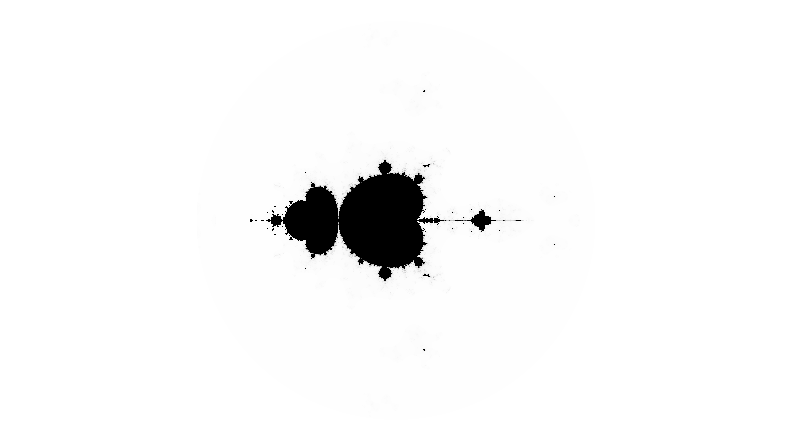}
\end{center}
\caption{ Fractal of $sin(z^2)+c$ , k is about 8 }
\label{fig:3_3}
\end{figure}

\begin{figure}
\begin{center}
\leavevmode
\includegraphics[scale=0.3]{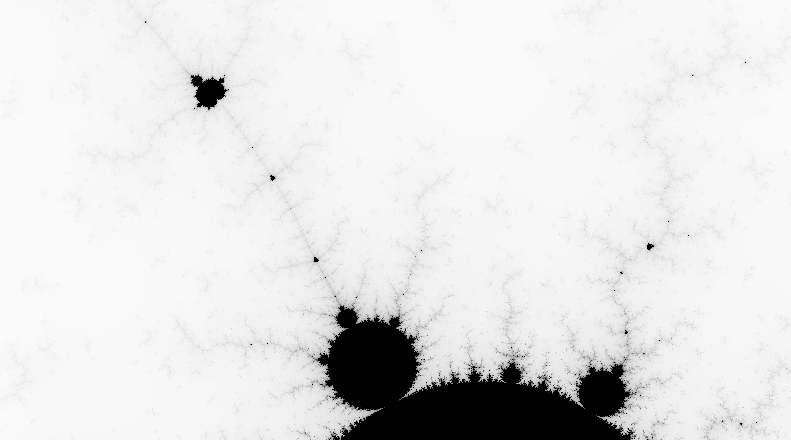}
\end{center}
\caption{ Fractal of $sin(z^2)+c$ showing Inner Mandelbrot, k is about 8 }
\label{fig:3_4}
\end{figure}

}

\item{Increasing the power,  `$sin(z^4)+c$' is (figure ~\ref{fig:3_5} at page  ~\pageref{fig:3_5}) virtually same as a Multibrot 
with $n=4$  (figure ~\ref{fig:1_1} at page  ~\pageref{fig:1_1}), and has Multibrots $n=4$ inside it (figure ~\ref{fig:3_6} at page  ~\pageref{fig:3_6}).

\begin{figure}
\begin{center}
\leavevmode
\includegraphics[scale=0.3]{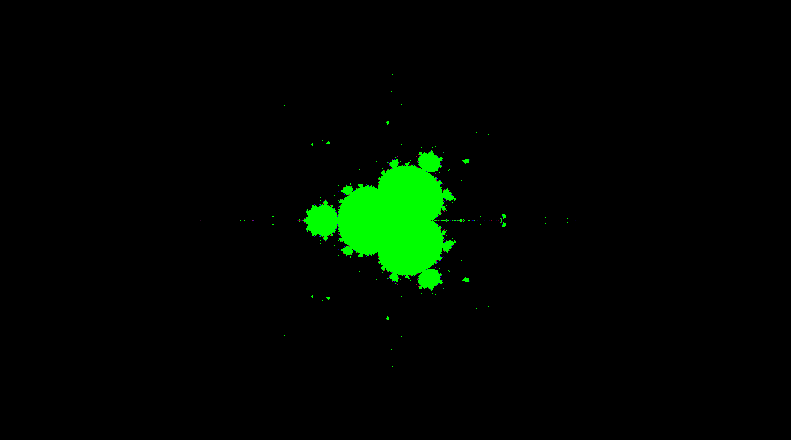}
\end{center}
\caption{ Fractal of $sin(z^4)+c$ , k is about 8 }
\label{fig:3_5}
\end{figure}

\begin{figure}
\begin{center}
\leavevmode
\includegraphics[scale=0.3]{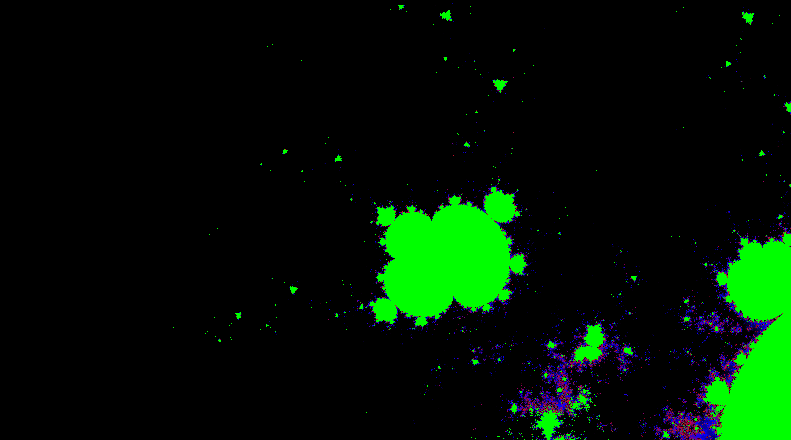}
\end{center}
\caption{ Fractal of $sin(z^2)+c$ showing Deformed Inner Multibrot with n=4 , k is about 8 }
\label{fig:3_6}
\end{figure}

}

\item{What more, the fractal of `$6(z - sin(z)) + c$' shows up just like a Multibrot of $n=3$ (figure ~\ref{fig:3_7} at page  ~\pageref{fig:3_7}). 

\begin{figure}
\begin{center}
\leavevmode
\includegraphics[scale=0.3]{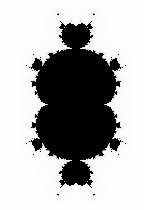}
\end{center}
\caption{ Fractal of $6(sin(z)-z)+c$ Resembles a Multibrot with n=3 , k is about 8 }
\label{fig:3_7}
\end{figure}

}

\end{enumerate}

These observations lead us to think that for those maps, the original transform function 
can probably be approximated by the Multibrot type function. 
This is the topic of discussion in this section.

\subsection{Asymptotic Bounds : Bachmann Landau notation}
What we informally seek is that in the neighborhood of a point, 
can we replace a function by another much simpler function, because, they behave approximately the same way.
This concept is already available as asymptotic bounds proposed by Bachmann \cite{pb}, and reintroduced by Landau \cite{el}, for real functions,
with a parameter $x$.  

\begin{definition}\label{big-theta}
\textbf{Big Theta : $\Theta$ . }

Let there exist functions $f(x) , g(x) $ and constants $k_1 > 0 , k_2 > 0$ such that :-
$$
k_1 f(x)  \le g(x) \le k_2 f(x) \; ; \;  x \to x_{crit}
$$
then, 
\begin{equation}\label{orig_theta}
g(x) \in \Theta(f(x))
\end{equation}
\end{definition}

The equation \eqref{orig_theta} is the Bachmann-Landau `$\Theta$' notation, 
mostly used in computer science (Knuth \cite{dk}) for getting approximate bounds for functions
(runtime analysis of algorithms).
With simple modification, complex analytic functions can be included in $\Theta$ notation.

\begin{definition}\label{big-theta-c}
\textbf{Complex Generalization of Big Theta. }

Iff for $F(z), G(z)$ and constants $k_1 , k_2  \in (0,k) $ 
at neighborhood of a point $z_c \in \mathbb{C}$:-
\begin{equation}\label{dom_rel}
k_1 |F(z)|  \le |G(z)| \le k_2 |F(z)| \; ; \; z \to z_c
\end{equation}
then, we define:-
\begin{equation}\label{dom_theta}
G(z_c) \in \Theta(F(z_c))
\end{equation}
\end{definition}

As both $F(z),G(z)$ are analytic, then, there are infinite points 
in the neighborhood of $z_c$ where equation \eqref{dom_rel} will be true.
On that neighborhood, $|F(z)|$ will approximate $|G(z)|$ with arbitrary accuracy.

\begin{theorem}\label{theta-poly}
\textbf{There is a  $\Theta$ relationship for Complex Polynomials. }

Let 
$$
f(z) = \sum_{k=l}^{u} {a_k z^k} \; ; \; a_k \in \mathbb{R}
$$ 
Then, if this holds:-
$$
\lim_{z \to z_c}{ \frac{|z|^k}{|z|^m} } = 0 
$$
then 
$$
\lim_{z \to z_c}{ f(z) \in \Theta (z^m ) }  
$$
In particular:-
$$
\forall |z| \to \infty  \implies  f(z) \in \Theta(  z^u ) 
$$
and,
$$
\forall |z| \to 0 \implies f(z) \in \Theta(  z^l ) 
$$
\end{theorem}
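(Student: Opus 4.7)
The plan is to isolate the designated dominant monomial $a_m z^m$ and show that all other terms become negligible relative to it in the limit, so that $|f(z)|$ is sandwiched between two positive multiples of $|z|^m$ in a punctured neighborhood of $z_c$. Concretely, I would first interpret the hypothesis as saying that for every index $k \in \{l, \dots, u\} \setminus \{m\}$ appearing in the sum (with $a_k \ne 0$), $|z|^k / |z|^m \to 0$ as $z \to z_c$, so that in particular $a_m \ne 0$ and $m$ is a uniquely distinguished exponent.

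The main algebraic step is to factor
\[
f(z) \;=\; a_m z^m \left( 1 + \sum_{k \ne m} \frac{a_k}{a_m}\, z^{\,k-m} \right),
\]
and to observe that $\left| \frac{a_k}{a_m} z^{\,k-m} \right| = \frac{|a_k|}{|a_m|} \cdot \frac{|z|^k}{|z|^m}$ tends to $0$ by hypothesis, for each of the finitely many $k \ne m$ with $a_k \ne 0$. Summing these finitely many vanishing terms, the bracketed factor tends to $1$ in modulus. From this I would extract a neighborhood $U$ of $z_c$ on which $\bigl|\sum_{k \ne m}(a_k/a_m) z^{k-m}\bigr| \le \tfrac12$, so that by the triangle inequality and its reverse,
\[
\tfrac12 \,|a_m|\,|z|^m \;\le\; |f(z)| \;\le\; \tfrac32\,|a_m|\,|z|^m \qquad (z \in U \setminus \{z_c\}),
\]
which is exactly the defining inequality \eqref{dom_rel} of Definition \ref{big-theta-c} with $F(z) = z^m$, $G(z) = f(z)$, $k_1 = \tfrac12 |a_m|$, $k_2 = \tfrac32 |a_m|$. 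This yields $f(z) \in \Theta(z^m)$ near $z_c$.

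For the two specified corollaries I would simply verify the dominance hypothesis. When $|z| \to \infty$, for every $k < u$ we have $|z|^k/|z|^u = |z|^{k-u} \to 0$, so $m = u$ works. When $|z| \to 0$, for every $k > l$ we have $|z|^k/|z|^l = |z|^{k-l} \to 0$, so $m = l$ works. Each follows directly from the general statement.

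The main obstacle I anticipate is the lower bound $|f(z)| \ge k_1 |z|^m$, since the triangle inequality only delivers upper bounds directly; the reverse triangle inequality handles this, but it forces me to shrink the neighborhood $U$ enough that the non-dominant contributions are controlled by a factor strictly less than $1$. A minor but necessary bit of care is to make sure the statement is read so that $m$ actually occurs in the support of $f$ (otherwise $a_m = 0$ and the factorization collapses), and to treat the sum of finitely many vanishing ratios uniformly, which is automatic because the polynomial has only finitely many terms.
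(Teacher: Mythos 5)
Your proposal follows essentially the same route as the paper's own proof: factor the dominant monomial out of $f(z)$ and observe that the remaining factor is bounded away from $0$ and $\infty$ as $z \to z_c$, giving the two-sided bound required by Definition \ref{big-theta-c}. Your write-up is in fact more careful than the paper's (which simply ``takes mods'' of the factored sum and declares the result immediate), since you make explicit the requirement $a_m \ne 0$, the shrinking of the neighborhood, and the reverse triangle inequality needed for the lower bound.
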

\begin{proof}[Proof of the theorem \ref{theta-poly} ]

We rewrite the polynomial as:-
$$
f(z) = z^m \sum_{k=l}^{u} { a_k \frac{z^k}{z^m} } 
$$
Taking mods we get:-

$$
|f(z)| = |z|^m | \sum_{k=l}^{u} { a_k \frac{|z|^k}{|z|^m} } | 
$$
which immediately gives the result.
\end{proof}

\begin{theorem}\label{theta-itr}
\textbf{The relation $\Theta$ is true over iterations over polynomials. }

Let $f(z)$ be a polynomial function with 
$$
\lim_{z \to z_c} f(z) \in \Theta(g(z)) ;
$$
where $g(z) = z^m $ Then,
$$
\lim_{z \to z_c} f^n(z) \in \Theta(g^n(z))
$$
\end{theorem}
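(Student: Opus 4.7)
The plan is induction on $n$, with the base case $n=1$ given directly by the hypothesis. For the inductive step I would write $f^{n+1}(z)=f(f^{n}(z))$, feed the iterate $w=f^{n}(z)$ into the base $\Theta$-bound on $f$, and chain the two estimates to obtain the claimed bound $\Theta(z^{m^{n+1}})=\Theta(g^{n+1}(z))$.

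The first move is to pin down what ``$z\to z_c$'' means in a way compatible with iteration. By Theorem \ref{theta-poly}, the relation $f(z)\in\Theta(z^{m})$ is substantive only at a critical value $z_c\in\{0,\infty\}$, and both of these points are fixed by $g(z)=z^{m}$ for $m\ge 2$. The hypothesis supplies a punctured neighbourhood $U$ of $z_c$ and constants $k_{1},k_{2}>0$ with $k_{1}|z|^{m}\le |f(z)|\le k_{2}|z|^{m}$ on $U$. I would shrink $U$ (or, for $z_c=\infty$, enlarge the exterior disc) so that this bound forces $f(U)\subseteq U$ and $f(z)\to z_c$ as $z\to z_c$: for $z_c=0$ this means choosing $r$ so small that $k_{2}r^{m-1}<1$, and for $z_c=\infty$ choosing $R$ so large that $k_{1}R^{m-1}>1$. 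With this invariance in hand, an orbit that enters $U$ stays in $U$ and is driven to $z_c$, and the base $\Theta$ estimate may legitimately be re-applied along the orbit.

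For the inductive step proper, assume $k_{1}^{(n)}|z|^{m^{n}}\le |f^{n}(z)|\le k_{2}^{(n)}|z|^{m^{n}}$ on $U$. By the invariance step, $w=f^{n}(z)$ lies in $U$, so the base bound applies to $w$, and composing the two inequalities gives
$$
k_{1}(k_{1}^{(n)})^{m}|z|^{m^{n+1}}\le |f(f^{n}(z))|\le k_{2}(k_{2}^{(n)})^{m}|z|^{m^{n+1}}.
$$
Setting $k_{i}^{(n+1)}:=k_{i}(k_{i}^{(n)})^{m}$ for $i=1,2$ closes the induction and delivers $f^{n}(z)\in\Theta(g^{n}(z))$.

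The principal obstacle is the invariance step: one has to check that iterating $f$ does not push points out of the neighbourhood in which the hypothesised asymptotic bound is valid, since without that guarantee the $\Theta$ estimate cannot be reused with $w=f^{n}(z)$ in place of $z$. Everything after that is bookkeeping, amounting to the composition of two inequalities and tracking how the constants multiply from one iterate to the next.
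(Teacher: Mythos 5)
Your proof is correct, but it follows a genuinely different route from the paper's. The paper does not induct at all: it regards the iterate $f^n$ as a single polynomial, writes it as its dominant power times a sum of ratios of powers (the analogue of the rewriting in Theorem \ref{theta-poly}), and takes moduli, so the conclusion is read off exactly as in Theorem \ref{theta-poly} with no orbit analysis --- though as printed the paper's exponents look like the products $mn, ln, un$, whereas under composition the correct extreme degrees are $m^{n}, l^{n}, u^{n}$, which is what your argument produces and what matches $g^{n}(z)=z^{m^{n}}$. Your induction instead reuses the one-step bound along the orbit, and therefore needs the forward-invariance step ($k_{2}r^{m-1}<1$ near $0$, $k_{1}R^{m-1}>1$ near $\infty$) that the paper's algebraic expansion avoids entirely; you correctly identify this as the real obstacle and close it. What your route buys is generality and precision: it never uses that $f$ is a polynomial beyond the hypothesized two-sided bound (so it would apply to any map satisfying the $\Theta$ estimate near a fixed critical value), it tracks the constants $k_{i}^{(n+1)}=k_{i}(k_{i}^{(n)})^{m}$ explicitly, and it makes precise the restriction $z_{c}\in\{0,\infty\}$ under which the statement has content --- a point the paper leaves implicit. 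What the paper's route buys is brevity: one algebraic identity about the composed polynomial, with the lowest/highest term dominating, and no need to verify that iterates remain in the neighbourhood where the bound holds.
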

\begin{proof}[Proof of the theorem \ref{theta-itr} ]
We rewrite the polynomial as:-
$$
f(z) = z^m \sum_{k=l}^{u} { a_k \frac{z^k}{z^m} } 
$$
and then the $f^n (z)$ becomes:-
$$
f^n(z) = z^{mn} \sum_{k=ln}^{un} { b_k \frac{z^k}{z^{mn}} } 
$$
Taking mod, the result follows immediately. 
 
\end{proof}

\begin{subsection}{The Principle of Dominance}

What we informally seek, is when we ``zoom in'' to a polynomial fractal, what ``inner'' fractal type we may find.
If we find a fractal of type $f_c(z) = f(z) + c $ at a point `$z_d$' inside a transformation function $T_c(z) = T(z) + c$, then we say that
the function $f_c(z)$ must have ``dominated'' the function $T_c(z)$ at point $z_d$.

\begin{theorem}\label{theta-fractal}
\textbf{Relationship $\Theta$ can be used to find escape time fractals.}

Let the transform function $z  \leftarrow T(z) $ be:-
$$
T = f(z) + c 
$$ 
$f(z)$ being a polynomial with $\lim_{z \to z_d} f(z) \in \Theta(g)$. 
Let then:-
$$
S = \{ z : \lim_{n \to \infty z \to z_d } { |f^n(z)| < \infty } \} 
$$
and
$$
S_\Theta = \{ z : \lim_{n \to \infty } { |g^n(z)| < \infty } \} 
$$
so that:-
$$
S = S_\Theta 
$$
\end{theorem}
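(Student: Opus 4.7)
The plan is to deduce the set equality essentially as a corollary of Theorem \ref{theta-itr}, which promotes a one-step dominance $f \in \Theta(g)$ near $z_d$ to a dominance of every iterate, $f^n \in \Theta(g^n)$. Once that promotion is in hand, the statement $S = S_\Theta$ reduces to an application of the two-sided sandwich inequality to the boundedness condition defining each set.

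Concretely, first I would invoke Theorem \ref{theta-itr} to extract a neighborhood $U$ of $z_d$ and positive constants $k_1, k_2$ such that
$$ k_1 \, |g^n(z)| \;\le\; |f^n(z)| \;\le\; k_2 \, |g^n(z)| \qquad \text{for } z \in U. $$
For the inclusion $S \subseteq S_\Theta$, pick $z \in S \cap U$; by the left half of the inequality, $|g^n(z)| \le k_1^{-1}|f^n(z)|$, so boundedness of $|f^n(z)|$ as $n \to \infty$ forces boundedness of $|g^n(z)|$, whence $z \in S_\Theta$. The reverse inclusion $S_\Theta \subseteq S$ follows symmetrically from the right half of the inequality. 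Both halves together yield $S = S_\Theta$ locally on $U$, which is the content of the theorem (since each set is defined as a limit as $z \to z_d$).

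The main obstacle I anticipate is the twin-limit nature of the definitions of $S$ and $S_\Theta$: the relation $f \in \Theta(g)$ is genuinely local near $z_d$, while iteration $f^n$ may carry a point $z$ near $z_d$ out of any fixed neighborhood. Thus, to legitimately apply Theorem \ref{theta-itr} on the full orbit, I would need to argue that either (i) the iteration remains in $U$ for all $n$ for the relevant points (which is natural for bounded orbits near an attracting regime), or (ii) restrict attention to sufficiently small $U$ where the constants $k_1,k_2$ can be chosen uniformly in $n$. A secondary subtlety is absorbing the additive constant $c$ into the asymptotic bound: since near $z_d$ with $|g(z)|$ dominating, the term $c$ is subsumed into the $\Theta$ constants, but making this precise requires verifying that $|g(z)|$ does not vanish in $U \setminus \{z_d\}$ so that the bound $|f(z)+c| \in \Theta(|g(z)|)$ still holds. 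Once these local-versus-global points are handled, the squeeze argument finishes the proof in one line.
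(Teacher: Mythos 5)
Your proposal follows essentially the same route as the paper's own proof: invoke Theorem \ref{theta-itr} to promote the one-step relation $f \in \Theta(g)$ to the iterates $f^n \in \Theta(g^n)$, and then extract both inclusions from the two-sided inequality. In fact you attach each half of the inequality to the correct inclusion (the lower bound $k_1|g^n| \le |f^n|$ yields $S \subseteq S_\Theta$ and the upper bound yields the reverse), whereas the paper's proof states these with the roles swapped, and the local-versus-global and additive-constant caveats you raise are real subtleties the paper passes over silently.
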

\begin{proof}[Proof of the theorem \ref{theta-fractal}]

We note that from theorem \eqref{theta-itr} :-
$$
\lim_{z \to z_d } f^n \in \Theta( g^n ) 
$$
This would imply that:-
$$
|f^n| \le K_u |g^n|
$$
which would imply that:-
$$
S \subseteq S_\Theta 
$$
also 
$$
|f^n| \ge K_l |g^n|
$$
which would imply that:-
$$
S_\Theta \subseteq S  
$$
combining we get $S_\Theta = S$.
\end{proof}

\begin{theorem}\label{domth}
\textbf{Dominance : Relation $ f \in \Theta(g) $  guarantees a `sub-fractal'. }
 
Lets $\lim_{z \to z_d} f \in \Theta(g) $.
Then, there for the escape time fractal generated from the transform function:-
$$
z \leftarrow f(z) + c
$$ 
there will be a embedded sub fractal of the form:-
$$
z \leftarrow g(z) + c 
$$ 
In general for $f(z) = P(z,l)$ , 
a truncated polynomial (definition \ref{trunc_poly-def}),
there would be  sub fractals of type $z^l + c$.  
\end{theorem}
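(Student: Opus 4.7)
The plan is to reduce the claim to Theorem \ref{theta-fractal} by introducing the zoom transform of Definition \ref{zoom} as a change of coordinates. First I would translate the prospective center $z_d$ to the origin by setting $w = z - z_d$, so that the hypothesis $\lim_{z \to z_d} f \in \Theta(g)$ becomes $\lim_{w \to 0} \tilde f(w) \in \Theta(\tilde g(w))$. If convenient a rotation/scaling factor $se^{i\phi}$ can be absorbed into the $\Theta$-constants, since $\Theta$ is stable under multiplication by nonzero complex constants. This puts the transform into exactly the form already handled by the earlier results of the section.

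Once we are in this normalized coordinate, the second step is to invoke Theorem \ref{theta-itr} to propagate the relation $f \in \Theta(g)$ through all iterates, yielding $f^n \in \Theta(g^n)$ on the same neighborhood. Theorem \ref{theta-fractal} then identifies the two escape sets locally, $S = S_\Theta$, so the escape-time image of $z \leftarrow f(z) + c$ restricted to a neighborhood of $z_d$ coincides, up to the chosen zoom, with the escape-time image of $z \leftarrow g(z) + c$. This is exactly the statement that a $g(z)+c$ fractal is embedded inside the $f(z)+c$ fractal, confirming the instance of Conjecture \ref{pseudo-self-similarity} relevant here.

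For the second assertion, specialize $f = P(z,l)$ with $l \ge 2$ and apply Theorem \ref{theta-poly} at $z_c = 0$ with $m = l$; since $l$ is the smallest power actually present in $P(z,l)$, the quotient $|z|^k/|z|^l \to 0$ as $|z| \to 0$ for every $k > l$, so $P(z,l) \in \Theta(z^l)$ near the origin. Feeding this into the first part with $z_d = 0$ and $g(z) = z^l$ produces the advertised embedded Multibrot $z \leftarrow z^l + c$.

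The step I expect to be the main obstacle is the interaction between the $\Theta$-limit at $z_d$ and the role of the parameter $c$ in the escape-time iteration: each seed satisfies $z_0 = c$, so \textquotedblleft$z \to z_d$\textquotedblright\ has to be reinterpreted as \textquotedblleft $c$ near a corresponding critical parameter $c_d$.\textquotedblright\ Making Theorem \ref{theta-fractal} do what we want requires reading its hypothesis as a uniform neighborhood statement rather than a pointwise limit, so that the $\Theta$-bounds control the entire orbit uniformly in $c$ across a small disc. Once this uniform version is set up, the preceding theorems chain together cleanly and the embedding follows.
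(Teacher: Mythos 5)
Your chain of reasoning is essentially sound within the paper's (admittedly informal) framework, but it is not the proof the paper writes out. The paper itself remarks that Theorem \ref{theta-fractal} renders Theorem \ref{domth} ``a triviality'' --- and that is exactly your route: Theorem \ref{theta-poly} at the origin to get $P(z,l) \in \Theta(z^l)$, Theorem \ref{theta-itr} to push the $\Theta$ relation through the iterates, and Theorem \ref{theta-fractal} to identify the two escape sets after a zoom-transform normalization of coordinates. The paper, however, then deliberately presents a different argument ``from the perspective of scaling transform'': assuming $l|g(z)| \le |f(z)| \le u|g(z)|$ with $u,l>0$, it invokes the scaling theorem \ref{scale} to assert that the fractal of $z \leftarrow f(z)+c$ is sandwiched between the fractals of $z \leftarrow l\,g(z)+c$ and $z \leftarrow u\,g(z)+c$, which are two scaled copies of one shape, and concludes that being bounded between two similar shapes forces the $f$-fractal to have that same shape. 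Your route buys a direct set-level identification of the escape sets and makes the coordinate normalization explicit; the paper's route buys geometric intuition but leans on the unproved (and in general false) principle that a set squeezed between two scaled copies of a shape must itself be similar to that shape. Your closing concern --- that $c$ is simultaneously the seed and the parameter, so the pointwise limit $z \to z_d$ must really be read as a uniform $\Theta$ bound on a neighborhood controlling whole orbits --- is a genuine gap, but it afflicts the paper's own proof just as much and is left unaddressed there too; flagging it is a point in your favor rather than a defect of your argument relative to the paper's.
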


Theorem \ref{theta-fractal} makes proof of this theorem 
a triviality. However, another proof can be furnished from the
perspective of scaling transform.
We demonstrate the proof here.
\begin{proof}[Proof of the  theorem \ref{domth} ]
We assume that:-
$$
l|g(z)| \le |f(z)| \le u|g(z)| \text{  with  } u,l > 0 
$$ 

If that is the case, then we know that the fractal:-
$$
z \leftarrow l g(z) + c 
$$
would be bigger in size than the fractal:-

$$
z \leftarrow u g(z) + c 
$$
if the fractal $z \leftarrow  g(z) + c$ exists due to 
the scaling theorem \eqref{scaling}.

That would mean that the fractal $z \leftarrow  f(z) + c$ 
is to be bound between the two fractals of the same shape,
which is only possible, iff the shape of the fractal 
$z \leftarrow  f(z) + c$ is similar to that of 
$z \leftarrow  g(z) + c$, which proves the theorem. 

\end{proof}

\end{subsection}

\subsection{Implications of Dominance}
The theorem \eqref{domth} suggests, that there will be Multibrots found in the vicinity 
of truncated polynomials. In this section we elaborate what observations we have made, and how 
those observations fit into the property of dominance. 
We demonstrate simple truncated polynomial fractals.

\subsubsection{Demonstration of Dominance}
To illustrate the result, we exhibit two transformation functions.
The first transformation function:-
$$
T_{2,3}(z) : z^2 + z^3 + c 
$$
has Mandelbrot ($z^2 + c$) fractals inside it. Figure  ~\ref{fig:3.x.1} on page ~\pageref{fig:3.x.1} shows the $T_{2,3}(z)$, and inner ($z^2 + c$) in the next image.

The seccond transformation function:-
$$
T_{4,7,-10}(z) : z^4 + z^7 - z^{10} + c 
$$
has Multibrot ($z^4 + c$) fractals inside it. Figure  ~\ref{fig:3.x.4} on page ~\pageref{fig:3.x.4} shows the $T_{4,7,-10}(z)$, and inner ($z^4 + c$) in the next image.

\begin{figure}
\begin{center}
\leavevmode
\includegraphics[scale=0.3]{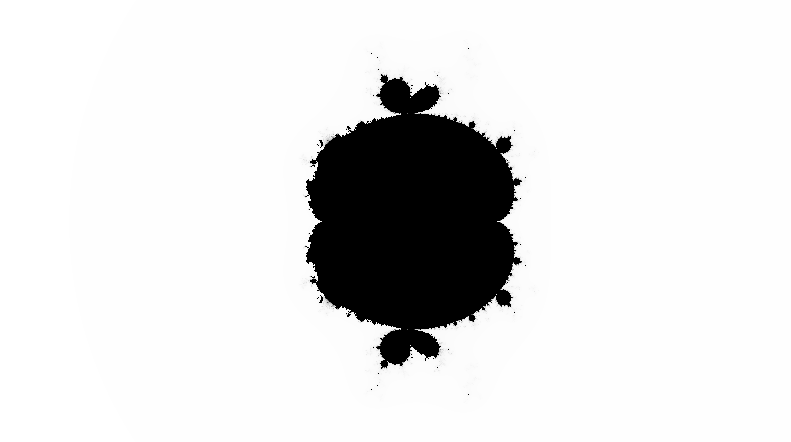}
\end{center}
\caption{The Polynomial Fractal of $z^2+z^3+c$ }
\label{fig:3.x.1}
\end{figure}

\begin{figure}
\begin{center}
\leavevmode
\includegraphics[scale=0.3]{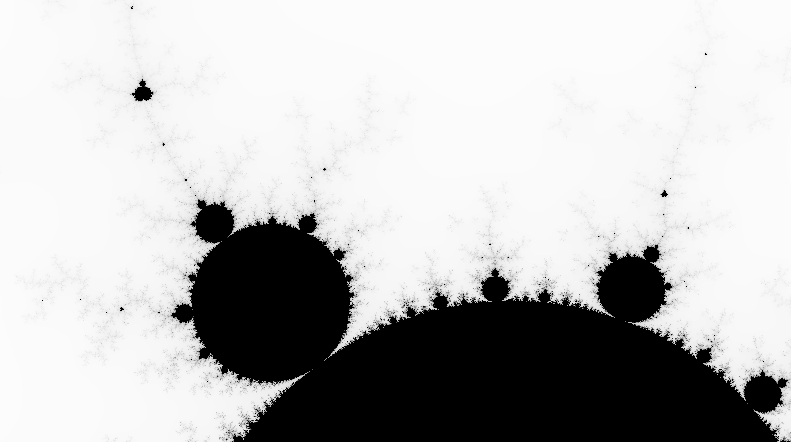}
\end{center}
\caption{ $z^2+z^3+c$ zoomed, showing existence of $z^2+c$ fractal inside }
\label{fig:3.x.2}
\end{figure}

\begin{figure}
\begin{center}
\leavevmode
\includegraphics[scale=0.3]{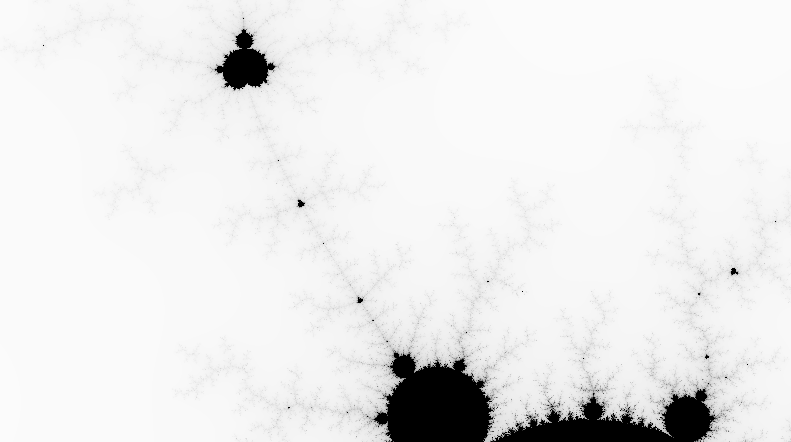}
\end{center}
\caption{ $z^2+z^3+c$, zoomed, showing clearly $z^2+c$ fractals}
\label{fig:3.x.3}
\end{figure}

\begin{figure}
\begin{center}
\leavevmode
\includegraphics[scale=0.3]{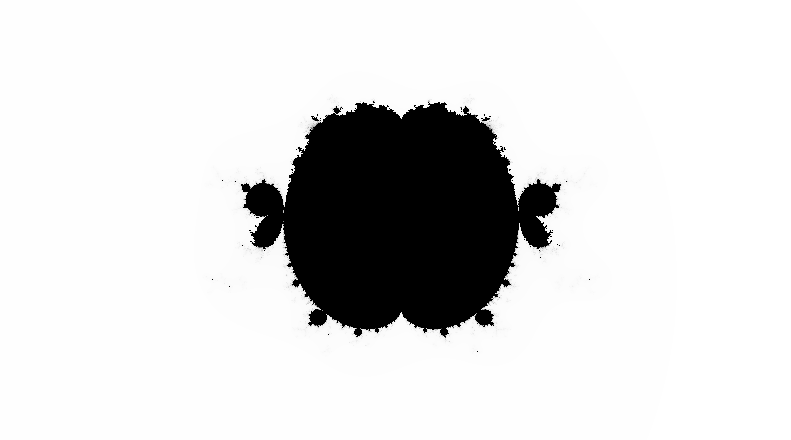}
\end{center}
\caption{ Image of ${(z-1)}^2 - {(z-1)}^3 + c$, showing the invariance under translation and rotation. 
Cross reference with Figure  ~\ref{fig:3.x.1} on page ~\pageref{fig:3.x.1}}
\label{fig:3.inv.1}
\end{figure}

\begin{figure}
\begin{center}
\leavevmode
\includegraphics[scale=0.3]{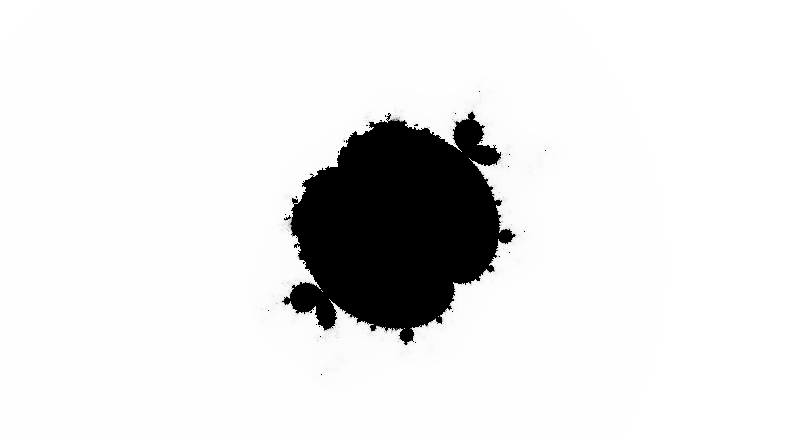}
\end{center}
\caption{ Image of $e^{(i\frac{\pi}{4})}{(z-1)}^2 + i{(z-1)}^3 + c $, 
Cross reference with Figure  ~\ref{fig:3.inv.1} on page ~\pageref{fig:3.inv.1}}
\label{fig:3.inv.2}
\end{figure}

\begin{figure}
\begin{center}
\leavevmode
\includegraphics[scale=0.3]{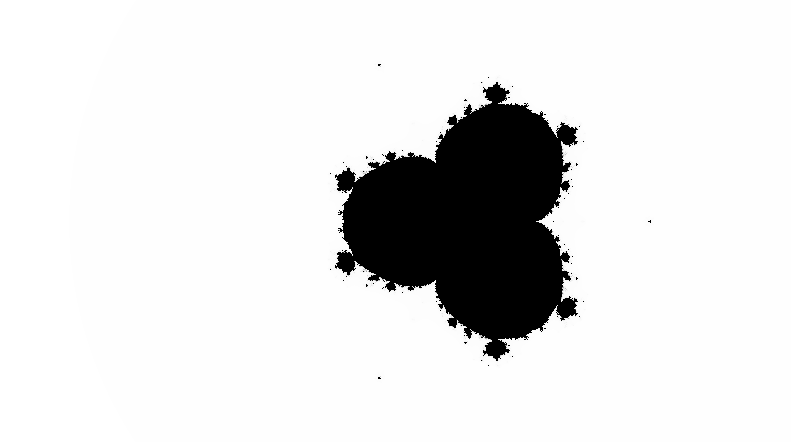}
\end{center}
\caption{The Polynomial Fractal of $z^4+z^7-z^{10}+c$, notice the similarity with figure : ~\ref{fig:1_1} on page ~\pageref{fig:1_1}}
\label{fig:3.x.4}
\end{figure}

\begin{figure}
\begin{center}
\leavevmode
\includegraphics[scale=0.3]{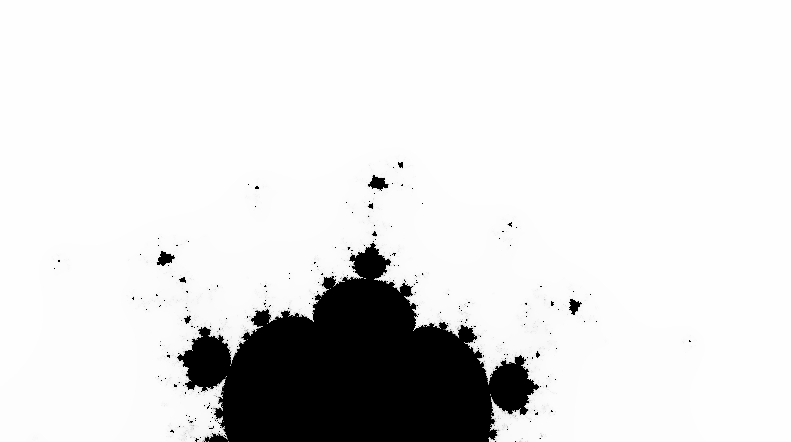}
\end{center}
\caption{The Polynomial Fractal of $z^4+z^7-z^{10}+c$, zoomed, showing how the borders look like a Multibrot with $n=4$}
\label{fig:3.x.5}
\end{figure}

\begin{figure}
\begin{center}
\leavevmode
\includegraphics[scale=0.3]{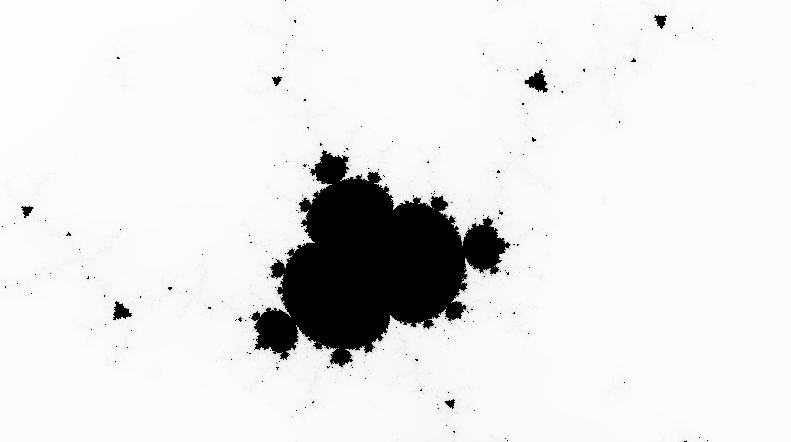}
\end{center}
\caption{ $z^4+z^7-z^{10}+c$ zoomed, showing $z^4+c$ fractals inside, compare with figure : ~\ref{fig:1_1} on page ~\pageref{fig:1_1} }
\label{fig:3.x.6}
\end{figure}

These two observations shows validity of the dominance property.

\subsubsection{Implication for Functions Expandable with Taylor Series}
The most important result of Dominance lies with the Taylor Series expansion.
For a function $f(z)$ which is capable of expansion by Taylor Series, ``multibrot'' type 
fractals can be generated by removing lower powers, or multiplying or dividing the function by $z^k$. 

We note down the expansion of $sin(z)$ :-
\begin{equation}\label{sin_z}
sin(z) = z - \frac{z^3}{3!} + \frac{z^5}{5!} - ...  
\end{equation}

and the expansion of $cos(z)$ :-

\begin{equation}\label{cos_z}
cos(z) = 1 - \frac{z^2}{2!} + \frac{z^4}{4!} - ...  
\end{equation}

In this subsection for brevity we have used the informal $G \sim F$ to denote $G \in \Theta_z(F)$.

Now clearly then due to dominance effect:-

\begin{enumerate}

\item{

$$
cos(z) - 1 + c \sim \frac{z^2}{2} + c \rightarrow z^2 + c 
$$
}

\item{
$$
6(sin(z) - z + c) \sim z^3 + c
$$
}

\item{
$$
sin(z^n) + c \sim z^n + c
$$
}

\item{
$$
z^k sin(z^n) + c \sim z^{n+k} + c  \; ; \; k \in \mathbb{R}
$$

}

\item{
And finally we explain, the initial $tan(z)$ fractals, which served as the starting point of this paper,in the next subsection.
}

\end{enumerate}

\subsection{ The $tan(z)$ Fractals}

Figure  ~\ref{fig:3__7} on page ~\pageref{fig:3__7} shows a ${(tan(z))}^2+c$ fractal, 
and figure ~\ref{fig:3_8} on page ~\pageref{fig:3_8} shows embedded Mandelbrots inside it.

\begin{figure}
\begin{center}
\leavevmode
\includegraphics[scale=0.3]{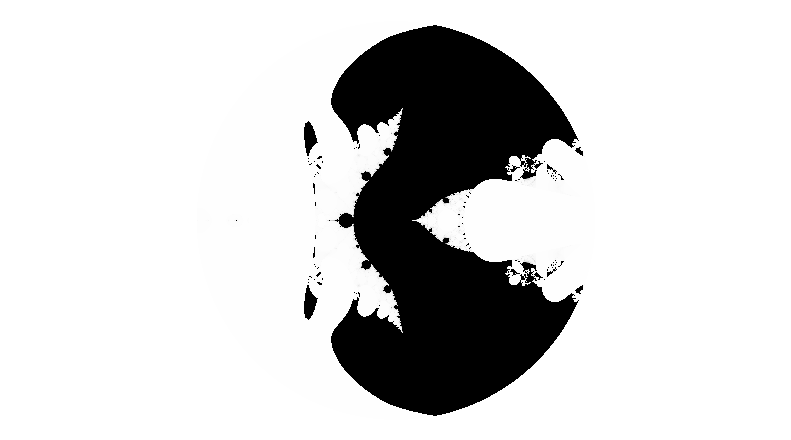}
\end{center}
\caption{ ${(tan(z))}^2+c$ Fractal }
\label{fig:3__7}
\end{figure}

\begin{figure}
\begin{center}
\leavevmode
\includegraphics[scale=0.3]{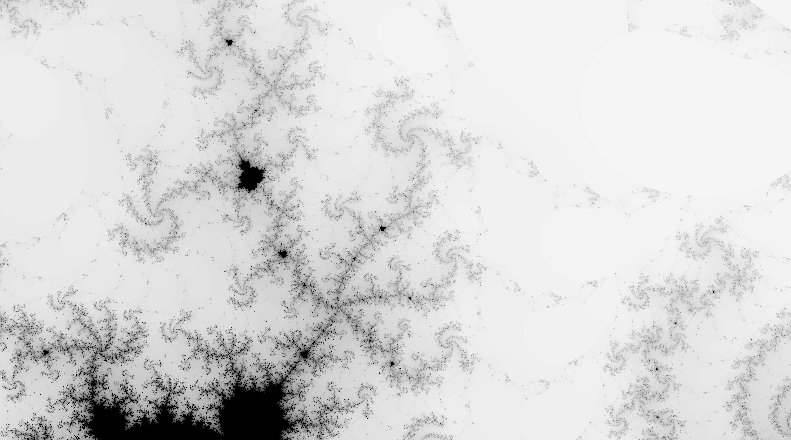}
\end{center}
\caption{ ${(tan(z))}^2+c$  zoomed, showing Mandelbrots inside }
\label{fig:3_8}
\end{figure}

\begin{equation}\label{tanz}
tan(z) = z + \frac{z^3}{3} + \frac{2 z^5}{15} + ... 
\end{equation}

hence, 
$$
{(tan(z))}^n + c \sim z^n + c
$$

Now we proceed to explain the observation of `${(tan(z))}^{-2} + c$'.
Figure  ~\ref{fig:3_9} on page ~\pageref{fig:3_9} shows a ${(cotan(z))}^2+c$ fractal, 
and figure ~\ref{fig:3_10} on page ~\pageref{fig:3_10} shows embedded Mandelbrots inside it.

\begin{figure}
\begin{center}
\leavevmode
\includegraphics[scale=0.3]{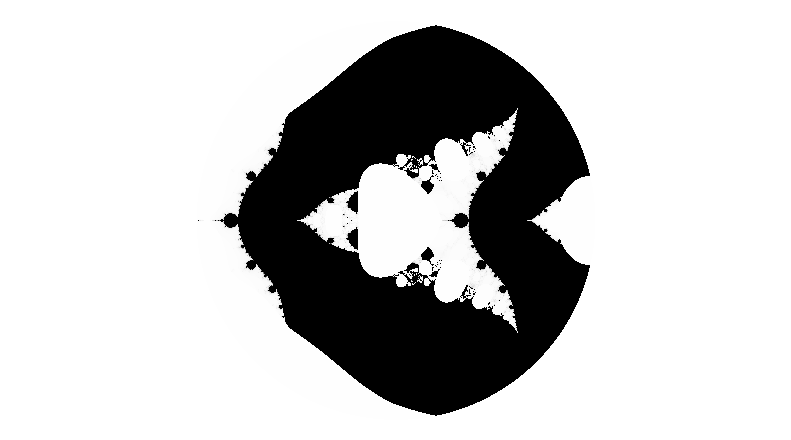}
\end{center}
\caption{ ${(cotan(z))}^2+c$ Fractal }
\label{fig:3_9}
\end{figure}

\begin{figure}
\begin{center}
\leavevmode
\includegraphics[scale=0.3]{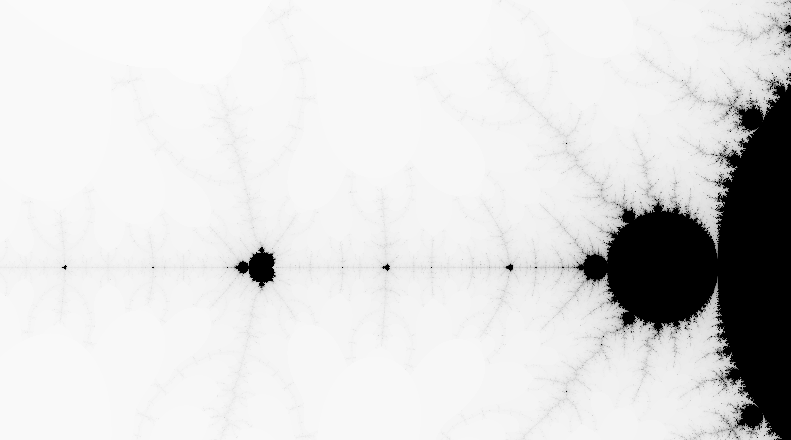}
\end{center}
\caption{ ${(cotan(z))}^2+c$  zoomed, showing Mandelbrots inside }
\label{fig:3_10}
\end{figure}

\begin{equation}\label{cotz}
\frac{1}{tan(z)} = cotan(z) = \frac{1}{z} - \frac{z}{3} - \frac{z^3}{45} - \frac{2 z^5}{945} - ...
\end{equation}

We note down that equation \eqref{cotz} contains power of $z^{-1}$, and hence, we can not directly use
the theorem of existence of embedded Multibrots. 

Let us define:-
\begin{equation}\label{z_inv}
g(a,b,z) = \frac{a}{z} - \frac{z}{b} \; ; \; a,b \in \mathbb{C}
\end{equation}

and 

\begin{equation}\label{z_invf}
T_g(a,b,n) :  z \leftarrow { (g(a,b,z)) }^n + c
\end{equation}

A fractal of type $T_g(1,3,2)$ is shown in figure  ~\ref{fig:3_9} on page ~\pageref{fig:3_9}.

\begin{figure}
\begin{center}
\leavevmode
\includegraphics[scale=0.3]{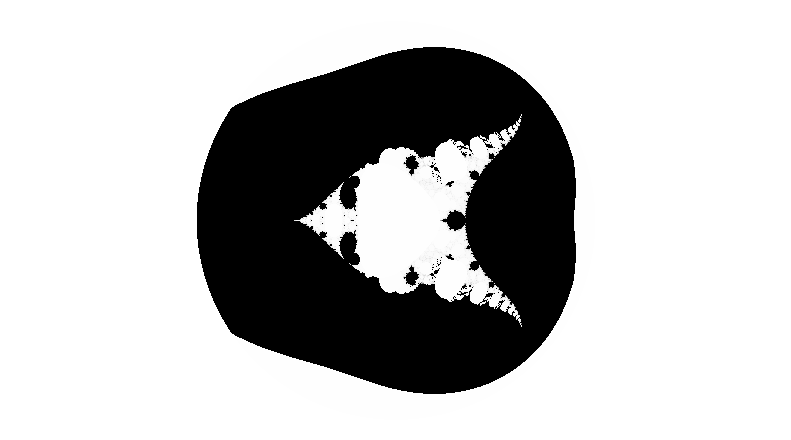}
\end{center}
\caption{ $T_g(1,3,2)$ Fractal, note the similarity with figure  ~\ref{fig:3_9} on page ~\pageref{fig:3_9} }
\label{fig:3_11}
\end{figure}

We state without proof:-

$$
{(cotan(z))}^n + c \sim   T_g(1,3,n) \; ; \; |z| < 1 
$$

and 

$$
{(cotan(z))}^n + c \sim   z^n + c \; ; \; |z| > 1 
$$

\begin{theorem}\label{zinv}
\textbf{Existence of  Multibrot inside Fractal of $T_g(a,b,n)$.}

Fractal of equation family $T_g(a,b,n)$ \eqref{z_invf} 
would contain Multibrots inside.
\end{theorem}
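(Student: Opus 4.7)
The plan is to apply the dominance machinery (Theorems \ref{theta-poly}, \ref{theta-itr}, and \ref{domth}) to a Laurent expansion of $(g(a,b,z))^n$, once in the regime $|z|\to\infty$ and once in the regime $|z|\to 0$, and read off the embedded Multibrots from the two limiting behaviors.

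First I would expand $(g(a,b,z))^n = (a/z - z/b)^n$ by the binomial theorem. This produces a Laurent polynomial of the form
\[
(g(a,b,z))^n \;=\; \sum_{k=0}^{n} \binom{n}{k}(-1)^{k}\, \frac{a^{n-k}}{b^{k}}\, z^{2k-n},
\]
whose exponents of $z$ range from $-n$ (when $k=0$) to $+n$ (when $k=n$). This is a finite sum that looks exactly like the object governed by Theorem \ref{theta-poly}, except that the exponents are allowed to be negative. Since the proof of Theorem \ref{theta-poly} only uses factoring out a single power of $z$ and comparing moduli, the same argument shows that as $|z|\to\infty$ the dominant monomial is $(-1)^n b^{-n} z^n$, so $(g(a,b,z))^n \in \Theta(z^n)$ near infinity, while as $|z|\to 0$ the dominant monomial is $a^n z^{-n}$, so $(g(a,b,z))^n \in \Theta(z^{-n})$ near the origin.

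Next I would pass from the Theta relation to the corresponding escape-time statement using Theorem \ref{theta-itr} (dominance is preserved by iteration for Laurent polynomials, by the same factoring argument) and then apply Theorem \ref{theta-fractal} to conclude that in a neighborhood of infinity the escape set of $T_g(a,b,n)$ coincides with that of $z \leftarrow z^n/b^n + c$, which by the scaling theorem \eqref{scaling} is a copy of the Multibrot $z^n + c$. Similarly, in a neighborhood of the origin the escape set coincides with that of $z \leftarrow a^n z^{-n} + c$, which is a scaled negative-power Multibrot. Finally, Theorem \ref{domth} packages these two observations as the existence of embedded Multibrot sub-fractals inside the image of $T_g(a,b,n)$.

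The main obstacle will be justifying the extension of the dominance results from genuine polynomials to Laurent polynomials. Theorems \ref{theta-poly}, \ref{theta-itr}, and \ref{domth} are all stated for truncated polynomials with nonnegative exponents, but the proofs only use the algebraic identity obtained by factoring out a single power of $z$ together with the triangle inequality on the remaining finite sum; neither step requires the remaining exponents to be nonnegative, so the extension goes through verbatim once the local neighborhood of expansion is chosen appropriately (an annulus around $0$ for the $z^{-n}$ Multibrot and the complement of a large disk for the $z^n$ Multibrot). A secondary, mostly bookkeeping issue is that the scaling factor $|a|^n$ or $|b|^{-n}$ must be absorbed through Theorem \ref{scale}, which requires the radius of convergence $k$ to be rescaled accordingly; this is exactly the caveat already mentioned after equation \eqref{spt_poly}, so no new idea is needed, only an explicit choice of $k$.
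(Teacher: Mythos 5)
There is a genuine gap, and it comes in two places. First, your extension of the dominance machinery to Laurent polynomials is not the ``verbatim'' matter you claim: Theorem \ref{theta-itr} rests on the fact that the iterate of a polynomial is again a polynomial of the factored form $z^{mn}\sum b_k z^{k-mn}$, but the iterate of $g(a,b,z)=a/z-z/b$ is \emph{not} a Laurent polynomial --- $g^2$ acquires poles at every zero of $g$ (e.g.\ at $z=\pm\sqrt{ab}$ up to the sign convention), and further iterates have poles at all preimages of $0$. So the single-factoring argument behind Theorems \ref{theta-itr} and \ref{theta-fractal} does not carry over, and the step from your two $\Theta$ relations to a statement about iterated escape sets is unsupported. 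Second, and more fundamentally, the two regimes you analyze cannot produce the claimed embedded (positive-power) Multibrot. Near $|z|\to\infty$ every point escapes immediately, so ``the escape set of $T_g$ in a neighborhood of infinity'' is empty; there is no bounded-orbit structure there for Theorem \ref{domth} to package into a sub-fractal. Near $|z|\to 0$ your dominant model is $a^n z^{-n}+c$, a negative-power map, which is not the Multibrot $z^n+c$ that the theorem (and the figures, e.g.\ the zoom of $T_g(2,5,3)$ showing an $n=3$ Multibrot) asserts. In short, the embedded Multibrots live at finite, regular points of $g$, which your $0/\infty$ asymptotics never see.

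The paper's own argument supplies the missing idea: expand $g$ locally around a regular point rather than at its singular or asymptotic ends. Writing $z=1-\epsilon$ and discarding higher powers of $\epsilon$ gives the affine approximation $g(a,b,z)\approx 2a - z\left(a+\tfrac{1}{b}\right)$ as in equation \eqref{z_inv_mult}, so near $z=1$ the transform $T_g(a,b,n)$ behaves like $z\leftarrow\left(2a - z\left(a+\tfrac{1}{b}\right)\right)^n + c$, which is exactly a shape-preserving transform of the form \eqref{spt} (translation, rotation and scaling) applied to the Multibrot $z^n+c$. The invariance results of Section 2 then explain why Multibrot copies appear inside the $T_g$ fractal. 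If you want to salvage your route, you would need to replace the $0/\infty$ dominance analysis by a dominance (or linearization) statement at a finite point away from the pole, which is precisely what the paper's expansion at $z=1$ does; your observation that $(g)^n\in\Theta(z^n)$ for large $|z|$ only recovers the unproved side remark ${(cotan(z))}^n+c\sim z^n+c$ for $|z|>1$, not the theorem itself.
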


\begin{proof}[Proof of theorem \eqref{zinv}]
We note that when $z \to 0$, the equation \eqref{z_inv} gets unbounded. 

Assume $k$ is large enough so that:-

$$
k > a - \frac{1}{b}
$$

Now, when $z \to 1$ but $z \ne 1$ and $|z| < 1$, we have:-
\begin{equation}\label{eps}
z = 1 - \epsilon \; ; \; \epsilon \in \mathbb{C} \; ; \; \epsilon \to 0 \; ; \;  \epsilon \ne 0
\end{equation}

In that case:-
$$
\lim_{ z \to 1} g(a,b,z) = \lim_{ \epsilon \to 0} ( \frac{a}{1-\epsilon} - \frac{1-\epsilon}{b} ) 
$$ 

That approximates into :-
$$
\lim_{ \epsilon \to 0} ( a( 1 + \epsilon + {\epsilon}^2 + {\epsilon}^3 ...) - \frac{1-\epsilon}{b} ) =  a(1 + \epsilon) - \frac{1-\epsilon}{b} 
$$ 
Ignoring the higher powers of $\epsilon$ we get:-

\begin{equation}\label{z_inv_mult}
g(a,b,z) = 2a - z(a + \frac{1}{b}) \; ; \; z \to 1 \; ; \;  z \ne 1
\end{equation}

And hence equation \eqref{z_invf} can be written as:-
\begin{equation}\label{z_invf_sim}
T_g(a,b,n) :  z \leftarrow { (g(a,b,z)) }^n + c \sim {(2a - z(a + \frac{1}{b}))}^n + c
\end{equation}

But, equation \eqref{z_invf_sim} is a `shape preserving transform' of equation \eqref{spt}.
Hence, the fractal of equation \eqref{z_invf_sim} is expected to have Multibrot fractals inside.
\end{proof}
A zoom of fractal of type $T_g(2,5,3)$ is shown in figure  ~\ref{fig:3_12} on page ~\pageref{fig:3_12},
revealing the Multibrot with $n=3$ inside it.

\begin{figure}
\begin{center}
\leavevmode
\includegraphics[scale=0.3]{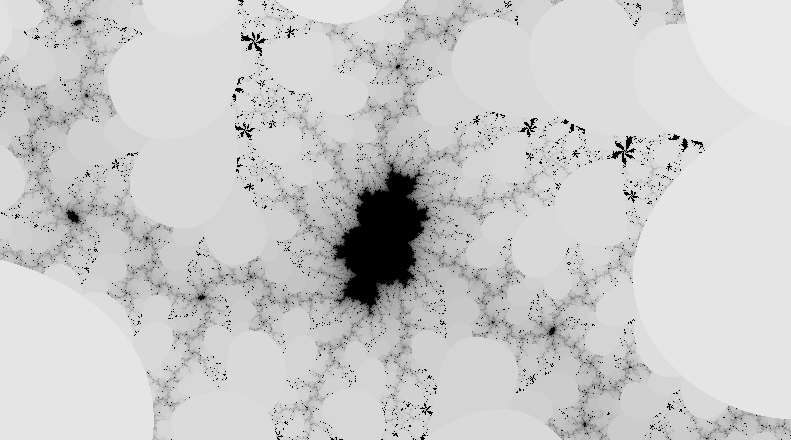}
\end{center}
\caption{ $T_g(2,5,3)$ Fractal, 1000 times zoomed, Shows a Multibrot inside with $n=$3 }
\label{fig:3_12}
\end{figure}

Hence we complete the explanations of the observations what we have stated 
in the beginning of the current section.

\end{section}

\begin{section}{Summary and Future Work and Thanks}
This paper started as a celebration of what Mandelbrot's work has been.
We started working on creating a software tool to produce `only' Mandelbrot set in color, and ended up
creating a software tool capable of evaluating any Mathematical expression, to generate escape time fractals,
in any color. While experimenting with this tool, we came up with heather to unknown, unexplained phenomenon.
The phenomenon of shape preserving transform, and the phenomenon of embedded inner fractal images,
were so fascinating that we tried to explain them, so as to generate a preliminary theoretical basis. 
The usage of transformation vector to preserve shape and transform any finite truncated polynomial fractal is proven. 
The dominance phenomenon we borrowed, and reused from Computer Science, 
and shown that it is a very powerful mechanism to predict existence of embedded inner fractals. 
While dominance principle shows promise, the principle is fairly new
at least in the fractal geometry domain. Further theoretical research work is needed to take the preliminary concepts 
discussed in this paper, into mainstream fractal geometry.

We would specially like to thank Carl Johansen, whose open sourced Mandelbrot viewer we borrowed, 
without which, not a single line of this paper could have been written.
\end{section}


\begin{thebibliography}{9}
\bibitem{cfnfs}
  {\sc Heinz-Otto Peitgen,	Hartmut J\"{u}rgens	, Dietmar Saupe},
  \emph{Chaos and Fractals New Frontiers of Science}.
  Springer.

   
\bibitem{fgn}
  {\sc Benoit B. Mandelbrot},
  \emph{The Fractal Geometry of Nature}. 
  W.H.Freeman And Company, New York.

\bibitem{mult_n}
  {\sc U. G. Gujar , V. C. Bhavsar}, 
  \emph{Fractals from $z \leftarrow z^\alpha + c$ in the Complex c-Plane}.
  Computers and Graphics 15, 3 (1991), 441-449.

\bibitem{mult_p}
  {\sc U. G. Gujar , V. C. Bhavsar , N. Vangala}, 
  \emph{Fractals from $z \leftarrow z^\alpha + c$ in the Complex c-Plane}.
  Computers and Graphics 16, 1 (1992), 45-49.


\bibitem{cmns}
  {\sc James Gleick},
  \emph{Chaos: Making a New Science}.
  Penguin, 2008.

\bibitem{pb}
  {\sc Paul Bachmann},
  \emph{ Die Analytische Zahlentheorie. Zahlentheorie. pt. 2 }.
   1894.
\bibitem{el}
  {\sc Edmund Landau},
  \emph{ Handbuch der Lehre von der Verteilung der Primzahlen. 2 vols}.
  1909.
\bibitem{dk}
 {\sc Donald Knuth},
 \emph{ The Art of Computer Programming, Volume 1: Fundamental Algorithms, Third Edition}. 
 Addison-Wesley, 1997. ISBN 0-201-89683-4. Section 1.2.11: Asymptotic Representations, 107-123.

\end{thebibliography}
\end{document}